\documentclass[12pt]{amsart}
\usepackage{amsmath, amssymb, amsfonts, amsthm, latexsym,enumerate}

\newtheorem{Theorem}{Theorem}[section]
\newtheorem{Lemma}[Theorem]{Lemma}
\newtheorem{Corollary}[Theorem]{Corollary}
\newtheorem{Proposition}[Theorem]{Proposition}

\theoremstyle{definition}
\newtheorem{Remark}[Theorem]{Remark}
\newtheorem{Example}[Theorem]{Example}

\def\To{\longrightarrow}
\def\reg{\operatorname{reg}}
\def\sdeg{\operatorname{sdeg}}
\def\depth{\operatorname{depth}}

\def\sup{\operatorname{sup}}

\def\bideg{\operatorname{bideg}}
\def\Tor{\operatorname{Tor}}

\def\a{{\bold a}}
\def\pp{{\frak p}}
\def\mm{{\frak m}}
\def\nn{{\frak n}}

\def\CC{{\mathbb C}}
\def\NN{{\mathbb N}}

\def\ZZ{{\mathbb Z}}

\def\G{{\Gamma}}

\begin{document}
\title{Saturation degree of powers of graded ideals}

\author{ Le Tuan Hoa}
\address{Institute of Mathematics, Vietnam Academy of Science and Technology, 18 Hoang Quoc Viet, 10072 Hanoi, Viet Nam}
\email{lthoa@math.ac.vn}

\author{Hop Dang Nguyen}
\address{Institute of Mathematics, Vietnam Academy of Science and Technology, 18 Hoang Quoc Viet, 10072 Hanoi, Viet Nam}
\email{ndhop@math.ac.vn}

\author{Ngo Viet Trung}
\address{Institute of Mathematics, Vietnam Academy of Science and Technology, 18 Hoang Quoc Viet, 10072 Hanoi, Viet Nam}
\email{nvtrung@math.ac.vn}

\keywords{Castelnuovo-Mumford regularity, partial regularity, saturation degree, powers of ideals, defect sequence.}
\subjclass{Primary 13C05, 13D45; Secondary 14B15}

\begin{abstract}
Let $I$ be a non-nilpotent graded ideal of a standard graded algebra $R$ over a Noetherian commutative ring. 
We show that the saturation degree $\sdeg I^n$ of $I^n$ is asymptotically a constant function or a linear function whose slope is one of the generating degrees of $I$. Furthermore, we give sufficient (and necessary) conditions for a numerical functions to be the function $\sdeg I^n$ of an ideal $I$ generated by forms of the same degree. Our results may provide a better understanding of the initial behavior of the regularity function $\reg I^n$.
\end{abstract}

\dedicatory{Dedicated to the memory of Jürgen Herzog}

\maketitle


\section{Introduction} \smallskip

Let $R$ be a standard graded algebra over a Noetherian commutative ring. 
The saturation of a graded ideal $I$ in $R$ is the ideal $\widetilde I := \cup_{t \ge 0} I:\mm^t$,  
where $\mm$ denotes the graded ideal generated by forms of positive degree of $R$. 
We have  $\widetilde I_t = I_t$ for $t \gg 0$.
The {\it saturation degree} $\sdeg I$ of $I$ is the least number $t$ such that $\widetilde I$ coincides with $I$ in degree $\ge t$ with the convention $\sdeg I = -\infty$ if $\widetilde I  = I$.

Ein, H\`a and Lazarsfeld \cite{EHL} recently proved that if $I$ is a homogeneous ideal whose zero-locus defines a non-singular complex projective scheme $X$, then $\sdeg I^n$ is bounded above by a linear function whose slope is the maximal generating degree $d(I)$ of $I$. 
This reminds us of an earlier result of Bertram, Ein, and Lazarsfeld \cite{BEL} that the Castelnuovo-Mumford regularity of $\Im^n$, where $\Im$ is the ideal sheaf of $X$, is bounded above by the same linear function.
That result had initiated the study on the Castelnuovo-Mumford regularity $\reg I^n$, which culminated in the works of Cutkosky, Herzog, Trung \cite{CHT} and Kodiyalam \cite{Ko}, who independently found out that if $I$ is an arbitrary homogeneous ideal of a polynomial ring $R$, then $\reg I^n$ is an asymptotically linear function whose slope is $\le d(I)$. Their result was later extended to the case $R$ is a standard graded algebra by Trung and Wang \cite{TW}. One may raise the question whether the same phenomenon holds for the function $\sdeg I^n$.

We will give a positive answer to the above question by the following result.
We call a graded ideal $J \subseteq I$ a {\em reduction} of $I$ if 
$I^{n+1} = JI^n$ for $n \gg 0$. Note that $I$ is itself a reduction of $I$.
\medskip

\noindent {\bf Theorem \ref{sdeg}.}
{\em $\sdeg I^n$ is asymptotically a constant function or a linear function whose slope is one of the generating degrees of any reduction of $I$.}
\medskip

Theorem \ref{sdeg} will be deduced from a result on partial regularities.

For a finitely generated graded $R$-module $H$ we set $a(H) := \sup\{n|\ H_n \neq 0\} \in \ZZ\cup \{\infty, -\infty \}$ with the convention $a(H) = -\infty$ if $H = 0$. Let $M$ be a finitely generated graded $R$-module.
For any given number $t \le \dim M$ we define
$$\reg_t M := \max\{a(H_\mm^i(M))+i|\ i \le t\},$$
where $H_\mm^i(M)$ denotes the $i$th local cohomology module of $M$ with respect to $\mm$.  
We call $\reg_t M$ the $t$-th {\em partial regularity} of $M$. This invariant has been studied in \cite{Sch,Tr1,Tr2,Tr3}.
Note that $\reg M := \reg_{\dim M} M$ is the Castelnuovo-Mumford regularity of $M$.

It is a less known fact that the partial regularities are asymptotically the constant function $-\infty$ or a linear function whose slope is one of the generating degrees of any reduction of $I$.
We will see that $\sdeg I^n = \reg_1 I^n$ for $n \gg 0$ if $H_\mm^1(I^n) \neq 0$ for $n \gg 0$ and that $\sdeg I^n = a(H_\mm^0(R))$ for $n \gg 0$ if $H_\mm^1(I^n) = 0$ for $n \gg 0$. Combining these facts, we obtain Theorem \ref{sdeg}.

Ein, H\`a and Lazarsfeld \cite[p. 1532]{EHL} asked whether their bound for the function $\sdeg I^n$ is best possible. Inspired by Theorem \ref{sdeg} we will present a homogeneous ideal $I$ whose zero-locus defines a non-singular complex projective scheme such that $\sdeg I^n$ is a linear function whose slope and intercept are much less than those of their bound. 

Another motivation for this paper comes from the works of Eisenbud and Harris \cite{EH}, 
Eisenbud and Ulrich \cite{EU}.
Let $I$ be an {\em equigenerated} ideal, i.e. $I$ is generated by forms of the same degree. 
They showed that the initial behavior of the function $\reg I^n$ may have some common properties before it becomes a linear function. 
Inspired by their results, we will study the initial behavior of the function $\sdeg I^n$.
By Theorem \ref{sdeg}, $\sdeg I^n$ is asymptotically a constant function or a linear function with slope $d$, where $d$ is the degree of the generators of $I$.
We will concentrate in the case $\sdeg I^n$ is asymptotically a linear function with slope $d$. 
In this case, we call the sequence $\{\sdeg I^n - dn\}_{n \ge 1}$, the {\em defect sequence} of the function $\sdeg I^n$.  

We will see that this sequence is convergent to a non-negative number though 
this sequence may have {\em initial negative values}. 
This is a contrast to the function $\reg I^n$, where we always have $\reg I^n - dn \ge 0$. 

If $\dim R/I = 0$, $\sdeg I^n = \reg R/I^n + 1$. The function $\reg R/I^n$ was studied in \cite{EH,HNT}.
Using the results there, we obtain the following complete characterization of the defect sequence of the function $\sdeg I^n$ for the case $\dim R/I = 0$.
\medskip

\noindent {\bf Theorem \ref{defect1}.}
{\em A sequence of integers $\{e_n\}$  is the defect sequence of the function $\sdeg I^n$ of an ideal $I$ equigenerated in degree $d$ with $\dim R/I = 0$ if and only if it is a weakly decreasing sequence of non-negative integers and $e_n \le e_{n+1} + d$ for all $n \ge 1$.} 
\medskip

The condition $e_n \le e_{n+1} + d$ for all $n \ge 1$ means that the function $\sdeg I^n$ is weakly increasing.

If $\dim R/I > 0$, we show that the condition $e_n \le e_{n+1} + d$ for all $n \ge 1$ is the only obstruction for a convergent sequence of non-negative integers to be the defect sequence of a weakly increasing function $\sdeg I^n$.
\medskip

\noindent {\bf Theorem \ref{defect2}.}
{\em Let $\{e_n\}$ be an arbitrary convergent sequence of non-negative integers. Then $\{e_n\}$ is the defect sequence of a weakly increasing function $\sdeg I^n$ of an ideal $I$ equigenerated in degree $d$ with $\dim R/I > 0$ if and only if $e_n \le e_{n+1} + d$ for all $n \ge 1$.}
\medskip

As a consequence, any convergent sequence of non-negative integers $\{e_n\}$ is the defect sequence of the function $\sdeg I^n$ for some equigenerated ideal $I$. In fact, the condition $e_n \le e_{n+1} + d$ is satisfied if $d$ is large enough. If $d$ is given, we can only show that any weakly decreasing sequence of non-negative integers
is a defect sequence of the function $\sdeg I^n$.
\medskip

\noindent {\bf Theorem \ref{defect3}.}
{\em Let $\{e_n\}$ be an arbitrary weakly decreasing sequence of non-negative integers. Then $\{e_n\}$ is the defect sequence of the function $\sdeg I^n$ of an ideal $I$ equigenerated in any degree $d$.}
\medskip

In particular, Theorem \ref{defect2} and  Theorem \ref{defect3} show that the initial sequence of the function $\sdeg I^n$ can be any finite {\em monotone sequence} of non-negative integers before it becomes a linear function. 
Furthermore, this initial sequence can have local maxima and minima at any sequence of prescribed places.

Throughout this paper, $R$ is a standard graded algebra over a Noetherian commutative ring $A$ and $I$ is a non-nilpotent proper graded ideal of $R$ (we need $I^n \neq 0$ for all $n$).
The asymptotic behavior of the functions $\reg_t I^n$ and $\sdeg I^n$ will be studied in Section 2 and Section 3, respectively. In Section 4 we study the initial behavior of the function $\sdeg I^n$. 
\medskip

\noindent{\em Acknowledgements}.
The content of this paper is an improved version of the last part of an unpublished version of the manuscript \cite{HNT}. We are grateful to Aldo Conca and the anonymous referee for thoughtful commments that have helped us to improve this paper.
The first author is supported by NAFOSTED grant 101.04-2024.07. The second and last authors are supported by NAFOSTED grant 101.04-2023.30. The second author is also supported by grant CSCL01.01/25-26 of the Institute of Mathematics, VAST. 


\section{Preliminaries}

In this section we will see that $\reg_t I^n$ is asymptotically a linear function.
This fact are more or less a direct consequence of results in \cite{BC, BCRV, CHT, Sch, Tr1, Tr2, TW}.
To see this we need some explanation.
We will follow the approach of \cite{TW}, which proved that $\reg I^n$ is asymptotically a linear function in the general setting that $R$ is a standard graded algebra over a Noetherian commutative ring $A$.

Let $J$ be a graded ideal of $R$. 
Suppose that $J = (f_1,...,f_s)$, where $f_1,...,f_s$ are graded elements of $R$ of degree $d_1,...,d_s$, respectively. 
Let $R[Jt] := \oplus_{n \ge 0}J^n t^n$ be the Rees algebra of $J$.
Then we can represent $R[Jt]$ as a quotient ring of a bigraded polynomial ring $S := R[y_1, . . . ,y_s]$, where $\bideg f = (\deg f, 0)$ for $f \in R$ and $\bideg y_i = (d_i , 1)$, $i = 1, . . . ,s$.  
Assume that $J$ is a reduction of $I$.
Then there exists a number $r$ such that $I^{n+r} = J^nI^r$ for $n \ge 1$.
From this it follows that $R[It]$ is a finitely generated bigraded $S$-module.

Let $E$ be an arbitrary finitely generated bigraded $S$-module. For $n \in \NN$ let 
$$E_n := \oplus_{a \in \ZZ}E_{(a,n)}.$$
Then $E_n$ is a  finitely generated graded $R$-module. 
For instance, $E_n \cong I^n$ if $E = R[It]$.
We will show that $\reg_t E_n$ is asymptotically a linear function. 
This will follow from the following results.  

\begin{Proposition} \label{a} \cite[Theorem 8.3.4(a)]{BCRV} {\rm (cf. \cite[Theorem 4.3]{BCH},  \cite[Lemma 2.1]{BC}, \cite[Proposition 2.1]{TW})}
Assume that $R = A$. Then $a(E_n)$ is asymptotically the constant function $-\infty$ or a linear function with slope in $\{d_1,..., d_s\}$.
\end{Proposition}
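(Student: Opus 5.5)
We reduce to the case $E = S/P$ with $P$ a bigraded prime, using a prime filtration, and in that case we compute $a(E_n)$ exactly. Here $S = A[y_1,\dots,y_s]$ with $\bideg a = (0,0)$ for $a\in A$ and $\bideg y_i = (d_i,1)$. Each $E_n$ is then a finitely generated graded $A$-module, and $a(E_n)=\sup\{a \mid E_{(a,n)}\neq 0\}$.

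\emph{Step 1 (reduction).} Since $E$ is a finitely generated bigraded module over the Noetherian bigraded ring $S$, it has a filtration
$$0=E^0\subset E^1\subset\cdots\subset E^m=E$$
by bigraded submodules with $E^j/E^{j-1}\cong (S/P_j)(-c_j,-e_j)$, where each $P_j$ is a bigraded prime ideal. In every bidegree $(a,n)$ the $A$-module $E_{(a,n)}$ is nonzero if and only if some factor $(E^j/E^{j-1})_{(a,n)}$ is nonzero, because the filtration restricts to a filtration of $E_{(a,n)}$. Hence
$$a(E_n)=\max_j a\big(((S/P_j)(-c_j,-e_j))_n\big).$$
Moreover, $((S/P)(-c,-e))_n$ is $(S/P)_{n-e}$ with the internal grading shifted by $c$. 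It therefore suffices to show the following: for $D=S/P$ with $P$ a bigraded prime, either $D_n=0$ for all $n>0$, or $a(D_n)=dn$ for all $n\ge 0$, where $d\in\{d_1,\dots,d_s\}$. Granting this, each term above is eventually $-\infty$ or the linear function $d(n-e_j)+c_j$, whose slope lies in $\{d_1,\dots,d_s\}$. A maximum of finitely many functions, each eventually $-\infty$ or eventually linear, is eventually $-\infty$ or equal to a single linear function among them. Indeed, two distinct linear functions compare in the same way for all $n\gg 0$: either the slopes differ, or the slopes agree and one intercept is larger.

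\emph{Step 2 (the prime case).} Let $D=S/P$, which is a domain generated over $A/(P\cap A)$ by the images $\bar y_i$. For $n\ge 1$, $D_n$ is spanned over $A/(P\cap A)$ by the images of monomials of $y$-degree $n$. A monomial $y^\alpha$ with $|\alpha|=n$ has internal degree $\sum\alpha_i d_i$.

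If every $y_i\in P$, then $D_n=0$ for all $n\ge1$, so $a(D_n)=-\infty$ eventually.

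Otherwise, let $d=\max\{d_i \mid y_i\notin P\}$. Only monomials in the $y_i\notin P$ survive, and each has internal degree at most $dn$, so $a(D_n)\le dn$. Conversely, choose $j$ with $y_j\notin P$ and $d_j=d$. Then $\bar y_j^{\,n}\neq 0$ because $D$ is a domain, and it has bidegree $(dn,n)$. Hence $a(D_n)=dn$ exactly.

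The only point needing some care is the reduction in Step 1, namely that nonvanishing in a bidegree is detected by the filtration factors together with the bookkeeping of the shifts. The prime case itself is immediate once one uses that $D$ is a domain.
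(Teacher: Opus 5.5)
Your proof is correct. The bigraded prime filtration reduces the problem to $D=S/P$. There $a(D_n)=dn$ for $n\ge 1$ with $d=\max\{d_i \mid y_i\notin P\}$, because the surviving monomials have internal degree at most $dn$ and $\bar y_j^{\,n}\neq 0$ in the domain $D$. This step needs no positivity of the $d_i$.

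The paper gives no argument of its own for this proposition; it only cites [BCRV, Theorem 8.3.4(a)] and related sources. Your proof is a standard, self-contained way to establish the cited result.
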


Let $M$ be a finitely generated graded $R$-module. We call a sequence $z_1,...,z_t$ of graded elements in $R$ a {\em filter-regular sequence} for $M$ if $z_i \not\in \pp$ for any associated prime $\pp \not\supseteq \mm$ of $M/(z_1,...,z_{i-1})M$, $i = 1,...,t$. For convenience, we set $(z_1,...,z_0) = 0$. Note that filter-regular sequences of any length exist. It may happen that $z_i \in (z_1,...,z_{i-1})$.

\begin{Proposition} \label{filter} \cite[Proposition 2.2(ii)]{Tr2} 
Let $z_1,...,z_t$ be a filter-regular sequence of linear forms for $M$. Then
$$\reg_t M = \max\big\{a((z_1,...,z_i)M:\mm/(z_1,...,z_i)M)|\ i = 0,...,t\big\}.$$
\end{Proposition}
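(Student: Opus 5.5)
The plan is to argue by induction on $t$. Write $M_i := M/(z_1,\dots,z_i)M$, so that the statement reads $\reg_t M = \max\{a(0:_{M_i}\mm) \mid i=0,\dots,t\}$.

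The first step is to show $a(0:_{N}\mm) = a(H_\mm^0(N))$ for every finitely generated graded $R$-module $N$. One inclusion is clear: $0:_N\mm \subseteq H_\mm^0(N)$. For the other, $H_\mm^0(N)$ is a finitely generated submodule killed by some $\mm^k$, so it vanishes in large degrees. If it is nonzero, a nonzero element of its top degree $a$ is annihilated by $\mm$, since $\mm$ is generated in positive degrees. With this, the formula to prove becomes
$$\reg_t M = \max\{a(H_\mm^0(M_i)) \mid i = 0,\dots,t\}.$$
For $t=0$ this is the definition. For the induction step it suffices to prove
$$\reg_t M = \max\{a(H_\mm^0(M)),\ \reg_{t-1}(M_1)\}. \qquad (*)$$
This is enough because $z_2,\dots,z_t$ is a filter-regular sequence for $M_1$, so induction applies to $M_1$.

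To prove $(*)$, set $N := M/H_\mm^0(M)$. Every associated prime of $N$ is an associated prime of $M$ not containing $\mm$, so filter-regularity makes $z_1$ a non-zerodivisor on $N$. Moreover $H_\mm^i(N) = H_\mm^i(M)$ for $i\ge 1$. The exact sequence $0\to N(-1)\xrightarrow{z_1} N \to N' := N/z_1N\to 0$ yields
$$H_\mm^i(N)(-1)\to H_\mm^i(N)\to H_\mm^i(N')\to H_\mm^{i+1}(N)(-1)\to H_\mm^{i+1}(N).$$
From it we read off two bounds. First, $a(H_\mm^i(N'))\le \max\{a(H_\mm^i(N)),\, a(H_\mm^{i+1}(N))+1\}$. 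Second, looking in degree $a+1$ where $a = a(H_\mm^{i+1}(N))$ is finite, the map $H_\mm^{i+1}(N)_{a}\to H_\mm^{i+1}(N)_{a+1}=0$ forces $H_\mm^i(N')_{a+1}\ne 0$, so $a(H_\mm^i(N'))\ge a(H_\mm^{i+1}(N))+1$. Taking the maximum over $i\le t-1$, and using $H_\mm^0(N)=0$, gives
$$\reg_{t-1}N' = \max\{a(H_\mm^j(M))+j \mid 1\le j\le t\}.$$

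Finally we pass from $N'$ back to $M_1$. There is an exact sequence $0\to K\to M_1\to N'\to 0$, where $K$ is the image of $H_\mm^0(M)$ in $M_1$. Since $K$ is $\mm$-torsion, $H^i_\mm(K)=0$ for $i\ge1$. Hence $H_\mm^i(M_1)= H_\mm^i(N')$ for $i\ge 1$, and $0\to K\to H_\mm^0(M_1)\to H_\mm^0(N')\to 0$ is exact. Therefore $a(H^0_\mm(M_1))=\max\{a(K),a(H^0_\mm(N'))\}$ with $a(K)\le a(H_\mm^0(M))$. Combining these,
$$\max\{a(H_\mm^0(M)),\reg_{t-1}M_1\} = \max\{a(H_\mm^0(M)),\reg_{t-1}N'\} = \reg_t M,$$
which is $(*)$.

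The main care needed is in this last bookkeeping step, namely checking that the torsion discrepancy $K$ between $M_1$ and $N'$ is absorbed by the term $a(H_\mm^0(M))$. The equality in the long-exact-sequence step, as opposed to a mere inequality, relies on the top-degree argument in the second bound.
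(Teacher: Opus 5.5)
Your proof is correct. The reduction $a(0:_N\mm)=a(H^0_\mm(N))$, the passage to $N=M/H^0_\mm(M)$ (on which $z_1$ is regular), the two-sided estimate from the long exact sequence, with the top-degree argument giving the lower bound, and the absorption of the torsion kernel $K$ into $a(H^0_\mm(M))$ all check out. The paper gives no argument here beyond citing \cite[Proposition 2.2(ii)]{Tr2} and remarking that the ring case extends to modules; your induction is the standard filter-regular argument behind that citation, carried out directly for modules, and its only tacit input is that $H^i_\mm(N)_n=0$ for $n\gg 0$ (so $a(H^i_\mm(N))<\infty$), which holds for $\mm=R_+$ over any Noetherian base ring.
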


This characterization was originally proved for $\reg_t R$ but the proof can be extended to the module case in the same way. It is a modified version of an earlier characterization for $\reg_t R$ in terms of a filter-regular sequence of $t+1$ linear forms \cite[Proposition 2.2]{Tr1}.

\begin{Theorem} \label{partial1}
Let $E$ be an arbitrary finitely generated bigraded $S$-module.
Then $\reg_t E_n$ is asymptotically the constant function $-\infty$ or a linear function with slope in $\{d_1,..., d_s\}$. 
\end{Theorem}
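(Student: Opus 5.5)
We reduce, via Proposition \ref{filter}, to statements of the form ``$a(\,\cdot\,)$ of a bigraded module of finite length in the $R$-direction is asymptotically linear.'' These we then settle with Proposition \ref{a}. First we make harmless reductions. Replacing $A$ by $A[u]_{\mm_A[u]}$ (or using a faithfully flat extension of $A$ with infinite residue fields) changes neither local cohomology nor $a$-invariants. So we may assume filter-regular linear forms exist. Note that $\reg_t$ makes sense for $t \le \dim E_n$, and $\dim E_n$ is eventually constant; for smaller dimensions the extra terms are $-\infty$.

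The key step is to choose a single sequence of linear forms $z_1,\dots,z_t \in R_1$ that is filter-regular for $E_n$ for all $n \gg 0$ simultaneously. We view $E$ as a finitely generated bigraded $S$-module. Its associated primes are finitely many bigraded primes $P$ of $S$, and the associated primes of $E_n$ (as $R$-module) not containing $\mm$ are, for $n\gg0$, among the contractions $P\cap R$ with $P \in \operatorname{Ass}_S E$ and $P \not\supseteq \mm S$. This is the standard argument of \cite{TW}, or it follows from the fact that $\operatorname{Ass}_R E_n$ is eventually stable. We pick $z_1 \in R_1$ avoiding the finitely many such primes. Then $z_1$ is filter-regular for $E_n$ for $n \gg 0$. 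We replace $E$ by $E/z_1E$, which is again a finitely generated bigraded $S$-module with $(E/z_1E)_n = E_n/z_1E_n$, and iterate. The resulting sequence works for all $n \ge n_0$.

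By Proposition \ref{filter}, for $n\ge n_0$ we have
$$\reg_t E_n=\max_{0\le i\le t} a\big(((z_1,\dots,z_i)E:_E\mm)/(z_1,\dots,z_i)E\big)_n,$$
where each $F^{(i)}:=((z_1,\dots,z_i)E:_E \mm S)/(z_1,\dots,z_i)E$ is a finitely generated bigraded $S$-module. Here we use that colon by $\mm$ commutes with taking the $n$-th component, since $\mm\subset R$ has degree $0$ in the second grading. Each $F^{(i)}$ is annihilated by $\mm S$, so it is a finitely generated bigraded module over $S/\mm S = A[y_1,\dots,y_s]$, i.e.\ the case $R=A$, with the same degrees $(d_j,1)$. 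Proposition \ref{a} then shows that each $a(F^{(i)}_n)$ is eventually $-\infty$ or linear with slope in $\{d_1,\dots,d_s\}$. A maximum of finitely many such functions is eventually of the same kind, because two linear functions are eventually ordered and $-\infty$ is dominated. This gives the theorem.

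The main obstacle is the uniform choice of filter-regular forms: we must justify that the non-$\mm$-containing associated primes of $E_n$ lie in a finite set independent of $n$ for large $n$, and that these are proper in degree one. The latter holds because a prime not containing $\mm$ cannot contain $R_1$. I would first try the argument via $\operatorname{Ass}_S E$: for large $n$, any associated prime of $E_n$ arises as the annihilator of a bihomogeneous element, and the annihilator is a contraction of a prime in $\operatorname{Ass}_S E$. If this is delicate, the fallback is the known stability of $\operatorname{Ass}_R E_n$ for $n\gg0$ for graded modules over Rees-type algebras.
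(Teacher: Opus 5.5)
Your proof is correct and follows the paper's argument. You make a single sequence of linear forms filter-regular for all $E_n$, apply Proposition~\ref{filter}, and then apply Proposition~\ref{a} to the modules $((z_1,\dots,z_i)E:\mm)/(z_1,\dots,z_i)E$ over $S/\mm S=A[y_1,\dots,y_s]$.

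The only difference is how the uniform sequence is produced. The paper uses generic forms over $A[U,1/\det U]$ and quotes \cite[Lemma 1.2]{TW}, which works over any Noetherian $A$. Your route via the finite set $\bigcup_n\operatorname{Ass}_RE_n=\{P\cap R\mid P\in\operatorname{Ass}_SE\}$ is also valid, and in fact holds for all $n$, not just $n\gg0$. However, it relies on prime avoidance in degree one. The extension $A[u]_{\mm_A[u]}$ only makes sense for local $A$, so for general $A$ you must use your parenthetical alternative, a faithfully flat extension with infinite residue fields such as Nagata's ring $A(u)$. Over $\mathbb{Z}$, for instance, degree-one avoidance genuinely fails.
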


\begin{proof} 
Assume that $R$ is generated as an $A$-algebra by the linear forms $x_1,\ldots,x_t$. 
Let $U=(u_{ij})_{t\times t}$ be a matrix of indeterminates, and set $z_i=\sum_{j=1}^t u_{ij}x_j$ for $i=1,\ldots,t$. Let $A'=A[U,1/\det(U)], R'=R\otimes_A A', S'=S\otimes_A A'$, and  $E'=E\otimes_A A'$. 
Then $R'$ is a standard graded $A'$-algebra, $E'$ is a finitely generated bigraded $S'$-module, and the flatness of the map $A\to A'$ implies that $\reg_t E'_n=\reg_t E_n$ for all $t$ and $n$.
As in the proof of \cite[Lemma 1.2]{TW}, $z_1,\ldots,z_t$ is an $E'_n$-filter-regular sequence of linear form in $R'$ for every $n$. 

Replacing $R$ by $R'$, we may assume that there exists a filter-regular sequence $ z_1,...,z_t$ of linear forms for all $E_n$, $n \ge 1$.
Let $E^i = (z_1,\ldots,z_i)E:\mm/(z_1,\ldots,z_i)E$ for $i = 0,...,t.$ Then $E^i$ is a finitely generated bigraded module over the bigraded ring $S/\mm S = A[y_1,...,y_s]$. By Proposition \ref{a},   $a((E^i)_n) = -\infty$ or $a((E^i)_n)$ is a linear function with slope in $\{d_1,...,d_s\}$ for $n \gg 0$. By Proposition \ref{filter}, $\reg_t E_n = \max\big\{a((E^i)_n)|\ i = 0,\ldots,t\big\},$ which implies the conclusion. 
\end{proof}

If $E = R[It]$, then $E_n \cong I^n$ for all $n$. 
Hence from Theorem \ref{partial1}  we immediately obtain the following result on the function $\reg_t I^n$.

\begin{Corollary} \label{partial2}
$\reg_t I^n$ is asymptotically the constant function $-\infty$ or a linear function whose slope is one of the generating degrees of any reduction of $I$.
\end{Corollary}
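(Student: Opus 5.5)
The plan is to apply Theorem \ref{partial1} to the Rees algebra $E = R[It]$, presented over the bigraded polynomial ring $S$ built from generators of an arbitrary reduction $J$ of $I$. Fix a reduction $J = (f_1,\dots,f_s)$ of $I$ with $\deg f_i = d_i$, and put $S = R[y_1,\dots,y_s]$ with $\bideg y_i = (d_i,1)$. As explained before Proposition \ref{a}, there is an $r$ with $I^{n+r} = J^nI^r$ for $n \ge 1$. Hence $R[It]$ is generated as an $S$-module by the finitely many homogeneous components $I^0t^0,\dots,I^rt^r$, each of which is a finitely generated $R$-module. So $E$ is a finitely generated bigraded $S$-module.

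Next I check that $E_n \cong I^n$ as graded $R$-modules. By definition $E_n = \oplus_a E_{(a,n)} = I^nt^n$, and the grading on $I^nt^n$ is the internal degree coming from $R$ (the first component of the bidegree). So $E_n \cong I^n$ with the grading preserved, and therefore $\reg_t E_n = \reg_t I^n$ for every $t$ and $n$.

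Applying Theorem \ref{partial1} now gives that $\reg_t I^n$ is, for $n \gg 0$, either $-\infty$ or a linear function with slope in $\{d_1,\dots,d_s\}$. The $d_i$ are exactly the generating degrees of the chosen reduction $J$, and $J$ was arbitrary; taking $J = I$ recovers the generating degrees of $I$ itself. One small point: the partial regularity $\reg_t$ was defined only for $t \le \dim M$, and $\dim I^n$ may vary with $n$. I would note that for $n \ge 1$ the support of $I^n$ is the same as that of $I$ (since $\sqrt{\operatorname{Ann} I^n} = \sqrt{\operatorname{Ann} I}$), so $\dim I^n$ is constant and $t$ is a fixed admissible index.

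I expect no real obstacle. The only mildly delicate step is the finite generation of $R[It]$ over $S$, and that follows directly from the reduction relation $I^{n+r} = J^nI^r$.
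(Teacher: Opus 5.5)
Your argument is the paper's own proof. You realize $E=R[It]$ as a finitely generated bigraded module over $S=R[y_1,\dots,y_s]$ built from generators of a reduction $J$, note that $E_n\cong I^n$, and apply Theorem \ref{partial1}.

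One correction to your side remark: the equality $\sqrt{\operatorname{Ann} I^n}=\sqrt{\operatorname{Ann} I}$ is false in general. For $R=k[x,y,z]/(x^2y,x^2z)$ and $I=(x)$, one has $\operatorname{Ann} I=(xy,xz)R$ but $\operatorname{Ann} I^n=(y,z)R$ for $n\ge 2$. Hence $\dim I=2>1=\dim I^n$.

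Nothing in your proof depends on that remark. The formula defining $\reg_t M$ makes sense for every $t$, and by Grothendieck vanishing it equals $\reg M$ once $t\ge\dim M$. Moreover, $\operatorname{Supp} I^{n+1}\subseteq\operatorname{Supp} I^n$ is a descending chain of closed sets, so $\dim I^n$ is constant for $n\gg 0$, which is all an asymptotic statement needs.
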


\begin{Remark} 
If $R$ is a polynomial ring over a field $k$, we know by \cite[Theorem 3.11]{Sch} that 
$$\reg_t M = \max\{a(\Tor_i^R(k,M))-i|\ \dim R - t \le i \le \dim R\}.$$
The right side is called the $(\dim R-t)$-regularity in \cite{BCP}. By \cite[Theorem 3.1]{CHT}, $a(\Tor_i^R(k,I^n))$ is asymptotically a linear function. Using these results, one can also prove Corollary \ref{partial2} in the case $R$ is a standard graded algebra over a field.
\end{Remark}

\begin{Remark} 
Suppose that $\reg I^n = dn + e$ for $n \gg 0$. It was shown in \cite[Theorem 3.2]{TW} that
$d = \min\{d(J)|\ \text{$J$ is a reduction of $I$}\}.$
Using \cite[Example 8.4.8 or 8.4.9]{BCRV} one can shows that $d$ needs not be the slope of the asymptotic function $\reg_t I^n$ for $t < \dim I$. 
\end{Remark}


\section{Asymptotic behavior of $\sdeg I^n$}

In this section we study the behavior of the function $\sdeg I^n$ for $n \gg 0$. 
By definition, we have
$$\sdeg I^n = a(\widetilde{I^n}/I^n) + 1 = a(H_\mm^0(R/I^n)) + 1.$$

From the short exact sequence $0 \To I^n \To R \To R/I^n \To 0$ we get the following long exact sequence of local cohomology modules
\[\tag{$\star$}
 0 \To H_\mm^0(I^n) \To H_\mm^0(R) \To H_\mm^0(R/I^n) \To H_\mm^1(I^n) \To H_\mm^1(R).
 \]
Hence, $a(H_\mm^0(R/I^n))$ can be estimated by the $a$-invariants of the other local cohomology modules of this sequence.

\begin{Lemma} \label{I^n} ~\par
{\rm (1)} $H_\mm^0(I^n) = 0$ for $n \gg 0$. \par
{\rm (2)} $\reg_1 I^n = a(H_\mm^1(I^n))+1$ for $n \gg 0$. \par
{\rm (3)} $a(H_\mm^1(I^n))$ is asymptotically the constant function $- \infty$ or a linear function with positive slope. 
\end{Lemma}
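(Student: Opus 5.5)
Throughout, I use the Rees algebra $E = R[It]$, so that $E_n \cong I^n$, and I reduce each part to Proposition~\ref{a} via Theorem~\ref{partial1} or its proof.

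For (1), the plan is to show that $H_\mm^0(I^n) = I^n \cap H_\mm^0(R)$ vanishes for large $n$. The submodule $H_\mm^0(R)$ is finitely generated and annihilated by $\mm^k$ for some $k$. By the Artin--Rees lemma there is a $c$ with $I^n \cap H_\mm^0(R) = I^{n-c}(I^c \cap H_\mm^0(R))$ for $n \ge c$. Since $I$ is a proper graded ideal, we have $I \subseteq \mm$. Hence for $n \ge c+k$ this intersection lies in $\mm^k H_\mm^0(R) = 0$, which proves (1).

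Alternatively, one can argue by degrees. Here $H_\mm^0(R)$ is a graded module concentrated in degrees $\le a(H_\mm^0(R)) < \infty$. On the other hand, if $I$ is generated in degrees $\ge 1$, then $I^n$ lives in degrees $\ge n \cdot \min\deg$. This degree argument needs the generators of $I$ to have positive degree. If $A$ has graded pieces in degree $0$ and $I$ has degree-$0$ generators, it fails, and then I rely on the Artin--Rees argument above, which only uses $I \subseteq \mm$.

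Part (2) follows directly from (1). By definition, $\reg_1 I^n = \max\{a(H_\mm^0(I^n)), a(H_\mm^1(I^n))+1\}$. For $n \gg 0$ the first term is $a(0) = -\infty$, leaving $\reg_1 I^n = a(H_\mm^1(I^n))+1$.

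For (3), Corollary~\ref{partial2} with $t=1$ together with (2) shows that $a(H_\mm^1(I^n))$ is asymptotically $-\infty$ or a linear function whose slope is a generating degree $d_i$ of some reduction $J$ of $I$. It remains to see that this slope can be taken positive. If all generators of $I$ have positive degree, take $J = I$; every $d_i > 0$, so the slope is positive.

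In general, the main obstacle is excluding slope $0$, which would come from degree-$0$ generators $y_i$. For this I would look inside the proof of Theorem~\ref{partial1}. There $a((E^i)_n)$, with $E^i$ a bigraded module over $A[y_1,\dots,y_s]$, has slope $d_i$ equal to the degree of a variable $y_i$ acting non-nilpotently on the relevant module. If that variable had degree $0$, the corresponding generator $f_i \in R_0 = A$ would lie in $I \subseteq \mm$. But $\mm$ is generated by positive-degree forms, so $\mm \cap R_0 = 0$, which forces $f_i = 0$. Then $y_i$ acts as zero on $E$, and hence on $E^i$, so it contributes nothing. Consequently only positive $d_i$ occur, and the slope is positive.
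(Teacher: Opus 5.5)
Your proof follows the paper's. Part (2) is identical: vanishing of $H^0_\mm(I^n)$ kills the first term of $\reg_1 I^n$. Part (3) is also identical: Corollary~\ref{partial2}, applied with the reduction $J=I$, gives a positive slope. For (1), your Artin--Rees argument is a valid but heavier substitute for the paper's one-line degree argument. That argument says $I^n\cap H^0_\mm(R)=0$ once $n>a(H^0_\mm(R))$, because $I^n$ lives in degrees $\ge n$.

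One correction: the step ``$I$ is a proper graded ideal, hence $I\subseteq\mm$'' is false when $A$ is not a field.
\begin{itemize}
\item For $A=k[t]$, $R=A[x]/(tx)$ and $I=tR$, one has $0\ne t^n\in H^0_\mm(I^n)$ for every $n$.
\item For $R=A[x]$ and $I=tR$, one gets $a(H^1_\mm(I^n))=-1$ for all $n$.
\end{itemize}
So the lemma itself fails without $I\subseteq\mm$ (equivalently, without $I$ being generated in positive degrees). This inclusion has to be taken as a standing hypothesis, as the paper's proof tacitly does.

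In particular, your fallback remarks are vacuous. The Artin--Rees argument uses exactly the same hypothesis as the degree argument, so it covers no additional case; and if $I$ has a nonzero degree-$0$ generator, then $I\not\subseteq\mm$. Likewise, in (3) your ``general'' discussion of degree-$0$ generators $f_i$ only reproves that an ideal contained in $\mm$ has no nonzero elements of degree $0$.
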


\begin{proof} 
(1) $H_\mm^0(I^n) = H_\mm^0(R)  \cap I^n = 0$ for $n > a(H_\mm^0(R))$.  
 \par
(2) For $n > a(H_\mm^0(R))$, $a(H_\mm^0(I^n)) = -\infty$ for $n \gg 0$, whence
$$\reg_1 I^n = \max\{a(H_\mm^0(I^n)), a(H_\mm^1(I^n))+1\} = a(H_\mm^1(I^n))+1.$$ \par
(3)  By Corollary \ref{partial2}, $\reg_1 I^n$ is asymptotically the constant function $- \infty$ or a linear function with positive slope. Hence, so is $a(H_\mm^1(I^n))$. 
\end{proof}

By Lemma \ref{I^n}(3), either $H_\mm^1(I^n) = 0$ for $n \gg 0$ or $H_\mm^1(I^n) \neq 0$ for $n \gg 0$. 
Depending on these conditions, we have the following asymptotic formulas for $\sdeg I^n$.

\begin{Proposition} \label{cases}
~ \par
{\rm (1)} If $H_\mm^1(I^n) = 0$ for $n \gg 0$, then $\sdeg I^n = a(H_\mm^0(R))+1$ for $n \gg 0$. \par
{\rm (2)} If $H_\mm^1(I^n) \neq 0$ for $n \gg 0$, then $\sdeg I^n = \reg_1 I^n$ for $n \gg 0$.
\end{Proposition}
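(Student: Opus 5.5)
The plan is to read off both cases from the exact sequence $(\star)$, using $\sdeg I^n = a(H_\mm^0(R/I^n))+1$ and Lemma \ref{I^n}.

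\emph{Case (1).} For $n \gg 0$ we have $H_\mm^0(I^n)=0$ by Lemma \ref{I^n}(1) and $H_\mm^1(I^n)=0$ by hypothesis. Then $(\star)$ gives an isomorphism $H_\mm^0(R)\cong H_\mm^0(R/I^n)$. This isomorphism is degree-preserving, so $a(H_\mm^0(R/I^n)) = a(H_\mm^0(R))$, and hence $\sdeg I^n = a(H_\mm^0(R))+1$.

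\emph{Case (2).} For $n\gg0$, $(\star)$ gives a short exact sequence
$$0\To H_\mm^0(R)\To H_\mm^0(R/I^n)\To K_n \To 0,$$
where $K_n := \ker\big(H_\mm^1(I^n)\to H_\mm^1(R)\big)$. Therefore
$$a(H_\mm^0(R/I^n)) = \max\{a(H_\mm^0(R)),\, a(K_n)\}.$$
By Lemma \ref{I^n}(2), $\reg_1 I^n = a(H_\mm^1(I^n))+1$, so it suffices to show two things:
\begin{enumerate}[(a)]
\item $a(K_n) = a(H_\mm^1(I^n))$ for $n\gg 0$;
\item $a(H_\mm^1(I^n)) \ge a(H_\mm^0(R))$ for $n \gg 0$.
\end{enumerate}

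Step (b) is easy. By Lemma \ref{I^n}(3), $a(H_\mm^1(I^n))$ is eventually a linear function with positive slope, since it is not $-\infty$ by hypothesis. It therefore tends to $+\infty$ and eventually exceeds the constant $a(H_\mm^0(R))$, which is finite.

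Step (a) is the main point. The plan is to show that $H_\mm^1(I^n)\to H_\mm^1(R)$ vanishes in all large degrees, uniformly enough in $n$. This map factors through the map $H_\mm^1(I^n)\to H_\mm^1(I)$ induced by the inclusion $I^n\subseteq I$, and $a(H_\mm^1(I))$ is a fixed finite number $c$ (possibly $-\infty$). Hence in degrees $> c$ the map to $H_\mm^1(R)$ is zero. Since $a(H_\mm^1(I^n))\to\infty$, for $n\gg0$ the top degree of $H_\mm^1(I^n)$ exceeds $c$. In that degree the whole component lies in $K_n$, which gives $a(K_n)=a(H_\mm^1(I^n))$.

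With (a) and (b) in hand, $a(H_\mm^0(R/I^n)) = a(H_\mm^1(I^n))$ for $n \gg 0$, and therefore $\sdeg I^n = a(H_\mm^1(I^n))+1 = \reg_1 I^n$.

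The possible subtlety is that $a(H_\mm^1(R))$ could be $+\infty$. However, $H_\mm^1(R)$ is Artinian, so its $a$-invariant is finite; the same holds for $H_\mm^1(I)$. So one could even bypass the factorization through $H_\mm^1(I)$ and use $c=a(H_\mm^1(R))<\infty$ directly.
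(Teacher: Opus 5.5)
Your proof is correct and is essentially the paper's argument: case (1) is the same isomorphism read off from $(\star)$, and in case (2) the paper likewise uses that $a(H_\mm^1(I^n))$ eventually exceeds $\max\{a(H_\mm^0(R)),a(H_\mm^1(R))\}$, so your detour through $H_\mm^1(I)$ is unnecessary. One small correction: here $\mm = R_+$ and the base ring $A$ is an arbitrary Noetherian ring, so $H_\mm^1(R)$ need not be Artinian; the finiteness of $a(H_\mm^1(R))$ still holds, because $H_{R_+}^i(M)_n = 0$ for $n \gg 0$ for every finitely generated graded $R$-module $M$.
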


\begin{proof} 
(1) By Lemma \ref{I^n}(1), $H_\mm^0(I^n) = 0$ for $n \gg 0$.
If $H_\mm^1(I^n) = 0$ for $n \gg 0$, then $H_\mm^0(R) \cong H_\mm^0(R/I^n)$ for $n \gg 0$ by $(\star)$. Therefore, 
$$\sdeg I^n = a(H_\mm^0(R/I^n))+1 = a(H_\mm^0(R))+1$$ 
for $n \gg 0$.

(2) If $H_\mm^1(I^n) \neq 0$ for $n \gg 0$, then $a(H_\mm^1(I^n)) \neq -\infty$ for $n \gg 0$. 
By Lemma \ref{I^n}(3), $a(H_\mm^1(I^n))$ is asymptotically a linear function of positive slope.  
From this it follows that
$$a(H_\mm^1(I^n)) > \max\{a(H_\mm^0(R)),a(H_\mm^1(R))\}$$
for $n \gg 0$. 
Using $(\star)$ we deduce that $a(H_\mm^0(R/I^n)) = a(H_\mm^1(I^n))$ for $n \gg 0$. So we obtain
$$\sdeg I^n = a(H_\mm^0(R/I^n)) +1= a(H_\mm^1(I^n))+1 = \reg_1 I^n$$
for $n \gg 0$ by Lemma \ref{I^n}(2).
\end{proof}

By Corollary \ref{partial2}, $\reg_1 I^n$ is asymptotically the constant function $-\infty$ or a linear function whose slope is one of the generating degrees of any reduction of $I$. Combining this fact with Proposition \ref{cases} we immediately obtain the following result.

\begin{Theorem} \label{sdeg}
$\sdeg I^n$ is asymptotically a constant function or a linear function whose slope is one of the generating degrees of any reduction of $I$.
\end{Theorem}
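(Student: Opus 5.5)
The plan is to split according to the dichotomy recorded right after Lemma \ref{I^n}. By Lemma \ref{I^n}(3), the function $a(H_\mm^1(I^n))$ is asymptotically either $-\infty$ or a linear function. Hence exactly one of the two situations holds: $H_\mm^1(I^n)=0$ for all $n\gg 0$, or $H_\mm^1(I^n)\neq 0$ for all $n\gg 0$. There is no oscillation between the two, and this is what makes the case split in Proposition \ref{cases} exhaustive. I treat the two cases separately.

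\emph{First case:} $H_\mm^1(I^n)=0$ for $n\gg 0$. Proposition \ref{cases}(1) gives $\sdeg I^n = a(H_\mm^0(R))+1$ for $n\gg 0$. This is a constant, independent of $n$. It may equal $-\infty$ when $H_\mm^0(R)=0$; this matches the convention $\sdeg I=-\infty$ for saturated ideals, and a constant function is still allowed by the statement.

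\emph{Second case:} $H_\mm^1(I^n)\neq 0$ for $n\gg 0$. Proposition \ref{cases}(2) gives $\sdeg I^n=\reg_1 I^n$ for $n\gg 0$. Now take any reduction $J$ of $I$ and apply Corollary \ref{partial2} with $t=1$. It says that $\reg_1 I^n$ is asymptotically either the constant $-\infty$ or a linear function whose slope is a generating degree of $J$.

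The value $-\infty$ is excluded in this case. Indeed, by Lemma \ref{I^n}(2), $\reg_1 I^n = a(H_\mm^1(I^n))+1$ for $n \gg 0$, and this is finite because $H_\mm^1(I^n)\neq 0$. Hence $\sdeg I^n$ is asymptotically linear with slope among the generating degrees of $J$. Since $J$ was an arbitrary reduction, this is exactly the claim.

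There is one subtlety: the condition $t\le\dim M$ in the definition of $\reg_1$. If $\dim I^n = 0$, then $\reg_1$ is formally undefined. But then $H_\mm^1(I^n)=0$ and we are in the first case, so nothing is needed there.

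The only real point to check is the exhaustiveness of the dichotomy, since that rests on Lemma \ref{I^n}(3). That lemma in turn uses Corollary \ref{partial2}, which applies uniformly because $I^n\cong R[It]_n$ for every $n$. Once this is granted, the theorem is an immediate combination of the preceding results.
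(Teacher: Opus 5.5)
Your proposal is correct and follows the paper's argument: the paper also obtains Theorem \ref{sdeg} by combining the case split of Proposition \ref{cases} (made exhaustive by Lemma \ref{I^n}(3)) with Corollary \ref{partial2} for $t=1$. Your extra checks, namely ruling out $-\infty$ in the second case via Lemma \ref{I^n}(2) and the remark on $\dim I^n=0$, only spell out details the paper leaves implicit.
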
 

The condition $H_\mm^1(I^n) = 0$ for $n \gg 0$ can be easily checked without computing the local cohomology module $H_\mm^1(I^n)$.

\begin{Lemma} \label{U}
Let $U = \cup_{t \ge 0}0:\mm^t$. Then
$H_\mm^1(I^n) = 0$ for $n \gg 0$ if and only if $\widetilde{I^n} = U+I^n$ for $n \gg 0$.
\end{Lemma}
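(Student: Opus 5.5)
We argue directly with the exact sequence $(\star)$ and the identification $H_\mm^0(M)=\cup_t 0:_M\mm^t$. Note that $U=H_\mm^0(R)$ and $H_\mm^0(R/I^n)=\widetilde{I^n}/I^n$. The image of the map $H_\mm^0(R)\to H_\mm^0(R/I^n)$ in $(\star)$ is $(U+I^n)/I^n$, and we always have $U+I^n\subseteq \widetilde{I^n}$. By exactness of $(\star)$,
$$\widetilde{I^n}/(U+I^n)\;\cong\;\ker\big(H_\mm^1(I^n)\to H_\mm^1(R)\big).$$
So the task is to relate the vanishing of $H_\mm^1(I^n)$ to the vanishing of this kernel.

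For the direction ($\Rightarrow$): if $H_\mm^1(I^n)=0$ for $n\gg0$, the kernel is zero. Hence $\widetilde{I^n}=U+I^n$ for $n\gg 0$.

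For the direction ($\Leftarrow$), suppose $\widetilde{I^n}=U+I^n$ for $n\gg0$. Then the kernel vanishes, so $H_\mm^1(I^n)$ injects into $H_\mm^1(R)$ for $n\gg0$, and $a(H_\mm^1(I^n))\le a(H_\mm^1(R))$. The number $a(H_\mm^1(R))$ is finite or $-\infty$, since $H_\mm^1(R)$ is Artinian-type, with its graded pieces vanishing in large degree. We then use Lemma~\ref{I^n}(3): $a(H_\mm^1(I^n))$ is asymptotically either $-\infty$ or a linear function of positive slope. A linear function of positive slope eventually exceeds the constant $a(H_\mm^1(R))$, which contradicts the bound. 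Therefore $a(H_\mm^1(I^n))=-\infty$, that is, $H_\mm^1(I^n)=0$ for $n\gg0$.

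The main obstacle is the ($\Leftarrow$) direction. Injectivity into $H_\mm^1(R)$ does not by itself force vanishing; the argument genuinely needs the asymptotic linearity with positive slope from Lemma~\ref{I^n}(3). It also needs the finiteness $a(H_\mm^1(R))<\infty$, which holds because local cohomology of a finitely generated graded module vanishes in large degrees (and we use it just as it is used in the proof of Proposition~\ref{cases}(2)). The identification of the image as $(U+I^n)/I^n$ is routine.
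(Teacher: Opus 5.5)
Your proposal is correct and follows essentially the same route as the paper: identify the image of $H_\mm^0(R)\to H_\mm^0(R/I^n)$ with $(U+I^n)/I^n$, read off ($\Rightarrow$) from $(\star)$, and for ($\Leftarrow$) use injectivity $H_\mm^1(I^n)\hookrightarrow H_\mm^1(R)$ to bound $a(H_\mm^1(I^n))\le a(H_\mm^1(R))$, which together with Lemma~\ref{I^n}(3) forces $H_\mm^1(I^n)=0$ for $n\gg0$. One small correction: over a non-local base ring $A$, $H_\mm^1(R)$ need not be Artinian, but all you use is that its graded pieces vanish in large degree, which is true.
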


\begin{proof} 
Note that $H_\mm^0(R) = U$ and $H_\mm^0(R/I^n) = \widetilde{I^n}/I^n$.
Then $U +I^n /I^n$ is the image of $H_\mm^0(R)$ in $H_\mm^0(R/I^n)$.
Hence $\widetilde{I^n} = U+I^n$ if and only if the map $H_\mm^0(R) \to H_\mm^0(R/I^n)$ is surjective.

If $H_\mm^1(I^n) = 0$, then $H_\mm^0(R) \to H_\mm^0(R/I^n)$  is surjective by $(\star)$. 
Conversely, if $H_\mm^0(R) \to H_\mm^0(R/I^n)$  is surjective for $n \gg 0$, then $H_\mm^1(I^n) \to H_\mm^1(R)$ is injective for $n \gg 0$. This implies $a(H_\mm^1(I^n)) \le a(H_\mm^1(R))$ for $n \gg 0$. 
Hence $a(H_\mm^1(I^n))$ cannot be a linear function with positive slope for $n \gg 0$.
By Lemma \ref{I^n}(3), $a(H_\mm^1(I^n)) = -\infty$ for $n \gg 0$, which means $H_\mm^1(I^n) = 0$ for $n \gg 0$.
\end{proof}

\begin{Example} \label{constant}
Let $R = k[x,y,z]/(x^2,xy,xz)$ and $I = zR$. Then $U = xR$, $I^n = z^nR$ and $\widetilde{I^n} = (x,z^n)R$ for all $n \ge 1$.  Hence $H_\mm^1(I^n) = 0$ for all $n \ge 1$.
We have $\sdeg I^n = 2$ for all $n \ge 1$.
\end{Example}

Ein, H\`a and Lazarsfeld \cite[Theorem A]{EHL} proved that if $R = \CC[x_0,...,x_r]$ is a polynomial ring over the complex numbers and $I = (f_0,...,f_p)$ an ideal generated by forms of degrees $d_0 \ge \cdots \ge d_p$ such that the projective scheme cut out by $f_0,...,f_p$ is nonsingular, then
$$\sdeg I^n \le d_0 n + d_1 + \cdots + d_r - r$$
for all $n \ge 1$, where $d_{p+1} = \cdots = d_r = 0$ if $p < r$. 
They asked whether this bound is best possible \cite[p.~1532]{EHL}. 
Theorem \ref{sdeg} suggests that the slope $d_0$ is not optimal. 
In fact, the following example gives such an ideal $I$ for which $\sdeg I^n$ is a linear function for all $n \ge 1$ with slope and intercept arbitrarily less than that of this bound.

\begin{Example} \label{EHL}
Let $R = \CC[x_0,...,x_r]$, $r \ge 1$, and $I = z(x_0^2,...,x_r^2,x_0 \cdots x_r)$,  where $z$ is an arbitrary linear form. Then the projective scheme cut out by the generators of $I$ is nonsingular. By the above result of Ein, H\`a and Lazarsfeld we have 
$$\sdeg I^n \le (r+2)n + 2r \text{ for all } n \ge 1.$$ 
We shall see that this bound is far from being optimal for all $n$. \par

We have $\widetilde{I^n} = (z^n)$. Let $Q = (x_0^2,...,x_r^2,x_0 \cdots x_r)$. Then
$$\widetilde {I^n}/I^n = (z^n)/z^nQ^n \cong (R/Q^n)(-n).$$ 
From this it follows that $\sdeg I^n = a(R/Q^n)+n+1.$
Let $f$ be a monomial which has maximal degree among the monomials not contained in $Q^n$.
 We can always write $f = gh$, where $g$ is a product of the monomials $x_0^2,...,x_r^2$ and $h$ is a squarefree monomial.
 Since $fx_i \in Q^n$ for all $i = 0,...,r$, we must have $g \in (x_0^2,...,x_r^2)^{n-1} \subset Q^{n-1}.$ 
 Since $f \not\in Q^n$, $\deg g = 2(n-1)$ and $h \not\in Q$. 
 Since $h$ has maximal degree among the squarefree monomials not contained in $Q$, $h$ is a product of $r$ variables. Hence $\deg f = 2(n-1) + r$. Therefore, $a(R/Q^n) = 2(n-1)+r$. So we get
 $$\sdeg I^n = 3n + r -1 \text{ for all } n \ge 1. $$
 Obviously, $((r+2)n + 2r) - (3n + r -1) = (r-1)(n + 1)$ can be arbitrarily large.
\end{Example}

It would be of interest to find an example analogous to Example 3.6 where the ideal $I$ is saturated.

\section{Initial behavior of $\sdeg I^n$}

In this section we study the behavior of the function $\sdeg I^n$ before it becomes a linear function.

{\em From now on, we assume that $I$ is a graded ideal equigenerated in degree $d$}.
By Theorem \ref{sdeg}, $\sdeg I^n$ is a linear function of slope $d$ for $n \gg 0$ if it is not a constant function for $n \gg 0$. 

If $\sdeg I^n$ is a linear function of slope $d$ for $n \gg 0$, 
we call the sequence $\{\sdeg I^n -dn\}$, $n \ge 1$, the {\em defect sequence} of the function $\sdeg I^n$. 
The terminology is adapted from \cite{Be}. This sequence is convergent to a non-negative number.

\begin{Proposition} \label{intercept}
Suppose that $\sdeg I^n = nd +e$ for $n \gg 0$. Then $e \ge 0$.
\end{Proposition}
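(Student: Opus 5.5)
The natural approach is to use Proposition \ref{cases}(2). Since $\sdeg I^n = nd+e$ is not eventually constant ($d \ge 1$), case (1) of Proposition \ref{cases} is excluded. Hence $H_\mm^1(I^n)\neq 0$ for $n\gg 0$ and
$$\sdeg I^n = a(H_\mm^0(R/I^n))+1 = a(H_\mm^1(I^n))+1 \quad\text{for } n\gg 0.$$
So it suffices to show that $a(H_\mm^0(R/I^n)) \ge nd-1$ for infinitely many (indeed all large) $n$. Equivalently, the nonzero module $\widetilde{I^n}/I^n$ must have a nonzero component in some degree $\ge nd-1$.

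The key step uses the fact that $I^n$ is generated in degree $nd$, so $(I^n)_t = 0$ for $t<nd$. Suppose, for contradiction, that $a(\widetilde{I^n}/I^n) < nd-1$, i.e. $\widetilde{I^n}_t = I^n_t$ for all $t \ge nd-1$. I would compare $\widetilde{I^n}$ with the saturation of $I^{n+1}$. Since $I^n\widetilde{I} \subseteq \widetilde{I^{n+1}}$, we get $\widetilde{I^n}\cdot I\subseteq \widetilde{I^{n+1}}$, but this gives only upper-type information.

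A more direct route exploits elements of degree $<nd$ in $\widetilde{I^n}$. By Lemma \ref{U}, nonvanishing of $H^1_\mm(I^n)$ means $\widetilde{I^n}\neq U+I^n$. Pick a homogeneous $f\in \widetilde{I^n}\setminus (U+I^n)$. Then $f\mm^k\subseteq I^n$ for some $k$, and $f\mm^{k-1}\not\subseteq I^n+U$ for minimal such $k$. Choose a homogeneous $g\in f\mm^{k-1}$ with $g\notin I^n$; by construction $g\mm\subseteq I^n$. Then $\mm g\subseteq I^n$ with $g\ne 0$ in $R/I^n$, and $g\notin U$. The socle-type element $g$ gives $a(H^0_\mm(R/I^n))\ge \deg g$.

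The main obstacle is bounding $\deg g$ from below by $nd-1$. I would argue as follows. If $\deg g < nd-1$, then $\deg(g x)<nd$ for each linear form $x$, so $gx\in (I^n)_{<nd}=0$. Hence $g\mm=0$, i.e. $g\in U$, a contradiction. Thus $\deg g\ge nd-1$, giving $a(H^0_\mm(R/I^n))\ge nd-1$, so $\sdeg I^n\ge nd$ for all large $n$, and therefore $e\ge 0$.
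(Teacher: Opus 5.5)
There is a genuine gap in your key step. Choosing $k$ minimal with $f\mm^k\subseteq I^n$ only gives $f\mm^{k-1}\not\subseteq I^n$. It does not give $f\mm^{k-1}\not\subseteq I^n+U$, so nothing prevents your $g$ from lying in $U$. In that case the conclusion ``$g\mm=0$, i.e.\ $g\in U$'' is not a contradiction.

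The claim can actually fail. Take $R=k[x,y]/(x^3,x^2y,xy^2)$ with $k$ a field, $J=(y^2)$ and $f=y$. Then:
\begin{itemize}
\item $U=(x)$ and $f\in\widetilde J\setminus(U+J)$, since $R/J$ has finite length;
\item $f\mm^2=(y^3)\subseteq J$ and $f\mm\not\subseteq J$, so $k=2$;
\item yet $f\mm=(xy,y^2)\subseteq J+U$, and $g=xy$ is an admissible choice in your construction with $g\in 0:\mm\subseteq U$.
\end{itemize}
A smaller imprecision: Lemma~\ref{U} as stated does not turn $H^1_\mm(I^n)\neq 0$ into $\widetilde{I^n}\neq U+I^n$ for a given $n$. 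Its contrapositive gives this for infinitely many $n$, which is all you need.

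The repair is to aim the degree argument at $f$ rather than at $g$. Since $R$ is standard graded, $f\mm^k$ is generated by the products of $f$ with monomials of degree $k$ in linear forms, all of degree $\deg f+k$. If $\deg f+k<nd$, these products lie in $(I^n)_{\deg f+k}=0$. Then $f\mm^k=0$, so $f\in U$, contradicting the choice of $f$. Hence $\deg f+k-1\ge nd-1$. By minimality of $k$, some product of $f$ with a monomial of degree $k-1$ is a homogeneous element of $\widetilde{I^n}\setminus I^n$ of degree at least $nd-1$. Alternatively, take $k$ minimal with $f\mm^k\subseteq I^n+U$ and use $U:\mm=U$.

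With this fix your argument is correct and genuinely different from the paper's. The paper uses $\sdeg I^n=\reg_1 I^n$ and, after a base change, a filter-regular linear form $z$ with Proposition~\ref{filter}. It then observes that the nonzero module $((zI^n:\mm)\cap I^n)/zI^n$ is a subquotient of $I^n$, hence lives in degrees $\ge dn$. Your route avoids filter-regular elements and base change. It works directly with $\widetilde{I^n}$, $U$ and the vanishing of $I^n$ below degree $nd$, and yields the bound for every $n$ with $\widetilde{I^n}\neq U+I^n$.
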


\begin{proof}
By Lemma \ref{cases}(2), $\sdeg I^n = \reg_1 I^n$ for $n \gg 0$.
As in the proof of Theorem \ref{partial1}, we may assume that there exists a filter-regular linear form $z$ for all $I^n$. By Proposition \ref{filter}, 
\begin{align*}
\reg_1 I^n & = \max\big\{a(0:_{I^n}\mm),a(zI^n:_{I^n}\mm/zI^n)\big\}\\
& = \max\big\{a((0:\mm)\cap I^n), a((zI^n : \mm)\cap I^n/zI^n)\big\}.
\end{align*}
We have $(0:\mm)\cap I^n = 0$ for $n \gg 0$. Hence, $(zI^n : \mm)\cap I^n \neq 0$ for $n \gg 0$.
Since $I^n$ is generated by forms of degree $dn$, $a((zI^n : \mm)\cap I^n) \ge dn$ for $n \gg 0$.
Therefore, $\reg_1 I^n \ge dn$ for $n \gg 0$, which implies $e \ge 0$.
\end{proof}

The following example shows that the defect sequence of $\sdeg I^n$ can be a weakly decreasing sequence of {\em negative integers} before it becomes a constant. It is constructed by using the theory of edge ideals.

\begin{Example} \label{negative}
Let $S = k[x_1,...,x_{2t+1}]$, $t \ge 2$, and $P = (x_1x_2,...,x_{2t}x_{2t+1},x_{2t+1}x_1)$,
the edge ideal of $(2t+1)$-cycle. Let $S^* = S[x_0]$ and $Q = x_0(x_0,...,x_{2t+1})$. Set 
$R = S^*/Q$ and $I = (P,Q)/Q.$ Note that as an ideal of $R$, $I$ is minimally generated by elements of degree $2$.

We claim that $\sdeg I^n = 2$ for $n \le t$ and  $\sdeg I^n = 2n$ for $n > t$.
In particular, we have $\sdeg I^n - 2n = 2-2n \le 0$ for $n = 2,...,t$. 

Since $\widetilde Q = (x_0)$, we have
$\widetilde{(P^n,Q)} = \widetilde{(P^n,x_0)} = (H_n,x_0)$, where $H_n$ denotes the saturation of $P^n$ in $S$.
Hence, $\widetilde{I^n}/I^n = (H_n,x_0)/(P^n,Q)$.
Consider the exact sequence
$$0 \to  (P^n,x_0)/(P^n,Q) \to \widetilde{I^n}/I^n \to (H_n,x_0)/(P^n,x_0) \to 0.$$
Note that $(P^n,x_0)/(P^n,Q) = (R/\mm)(-1)$ and $(H_n,x_0)/(P^n,x_0) \cong H_n/P^n$.
Then
$$a(\widetilde{I^n}/I^n) = \max\{1, a(H_n/P^n)\}.$$

For $n \le t$, we know that $\depth S/P^n > 0$ \cite[Lemma 3.10]{RTY}.
Thus,  $H_n = P^n$. Therefore, $a(\widetilde{I^n}/I^n) = 1$, which implies $\sdeg I^n = 2$. 
 
For $n > t$, it suffices to show that $a(H_n/P^n) = 2n-1$.
We note that $a(H_n/P^n) = a(P^n:\nn/P^n)$, where $\nn$ denotes the maximal homogeneous ideal of $S$.
Let $x^\a$ be an arbitrary monomial of $P^n:\nn \setminus P^n$, where $\a$ denotes the exponent vector of the monomial. 
If $x^\a \in P^{n-1}: \nn\setminus P^{n-1}$, we may assume that $\deg x^\a \le 2(n-1)-1$.
If $x^\a \not \in P^{n-1}: \nn\setminus P^{n-1}$, then the associated weighted graph $\G_\a$ (see definition in \cite[Section 1]{LT}) is matching critical by \cite[Lemma 1.6 and Definition 2.1]{LT}. This implies $\deg x^a = 2(n-1)+1 \le 2n -1$ \cite[Corollary 2.7]{LT}. Therefore, $a(P^n:\nn/P^n) \le 2n-1$.
On the other hand, for $f = x_1 \cdots x_{2t+1}$, we have $x_if \in P^{t+1}$ for $i = 1,...,2t+1$.
This implies $(x_1x_2)^{n-t-1}f \in P^n :\nn$. 
Since $P^n$ is generated by elements of degree $2n$ and $\deg (x_1x_2)^{n-t-1}f = 2n-1$, we have $(x_1x_2)^{n-t-1}f \not\in P^n$. Therefore, $a(P^n:\nn/P^n) \ge 2n-1$. So we get $a(P^n:\nn/P^n) = 2n-1$, which implies $\sdeg I^n = 2n$.
\end{Example}

In general, $\sdeg I^n = a(\widetilde{I^n}/I) + 1 \le \reg R/I^n + 1$, which becomes an equality if $\dim R/I = 0$. 
It is known that $\reg R/I^n$ is asymptotically a linear function of slope $d$ with $\reg R/I^n - dn +1 \ge 0$. The sequence $\{\reg R/I^n - dn +1\}$, $n \ge 1$, is called the defect sequence of the function $\reg R/I^n$. 

If $\dim R/I = 0$, there is a complete characterization of sequences of non-negative integers which are the defect sequences of $\reg R/I^n$  \cite[Theorem 4.9]{HNT} (the necessary part is due to \cite[Proposition 1.1]{EH}).
Translating this characterization for the defect sequence of $\sdeg I^n$ we immediately obtain the following result. 

\begin{Theorem} \label{defect1}
A sequence of integers $\{e_n\}$ is the defect sequence of the function $\sdeg I^n$ of an ideal $I$ equigenerated in degree $d$ with $\dim R/I = 0$ if and only if it is a weakly decreasing sequence of non-negative integers and $e_n \le e_{n+1} + d$ for all $n \ge 1$.
\end{Theorem}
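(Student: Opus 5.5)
The plan is to reduce Theorem \ref{defect1} to the known characterization of defect sequences of $\reg R/I^n$ in the zero-dimensional case. We use \cite[Theorem 4.9]{HNT}, with the necessity part from \cite[Proposition 1.1]{EH}.

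The first step is to identify $\sdeg I^n$ with $\reg R/I^n + 1$ when $\dim R/I = 0$. In this case $\dim R/I^n = 0$ for all $n$, so the only nonzero local cohomology of $R/I^n$ is $H_\mm^0(R/I^n) = \widetilde{I^n}/I^n = R/I^n$. Hence
$$\sdeg I^n = a(H_\mm^0(R/I^n)) + 1 = \reg R/I^n + 1.$$
Note that $I^n \ne \widetilde{I^n}$, because $I$ is proper and non-nilpotent, so $\sdeg I^n \neq -\infty$. Consequently
$$\sdeg I^n - dn = \reg R/I^n - dn + 1,$$
so the defect sequence of $\sdeg I^n$ coincides term by term with the defect sequence of $\reg R/I^n$ as defined just before the theorem. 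In particular, $\sdeg I^n$ is asymptotically linear of slope $d$, and its defect sequence is defined.

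The second step is to transfer the characterization. Under this identification, \cite[Theorem 4.9]{HNT} says that a sequence of integers is such a defect sequence, for some $I$ equigenerated in degree $d$ with $\dim R/I=0$, exactly when it is weakly decreasing, non-negative, and satisfies $e_n \le e_{n+1}+d$. So the statement follows once we confirm that the hypotheses in \cite{HNT} match ours: an equigenerated ideal of degree $d$, $\dim R/I = 0$, and the same class of rings $R$ (or at least that the realizing examples in \cite{HNT} live in a standard graded algebra, e.g.\ a polynomial ring over a field).

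For the reader's sake, we can also sketch why the necessary conditions hold directly. Non-negativity follows from $\reg R/I^n \ge dn-1$, since $I^n$ is generated in degree $dn$ and $(R/I^n)_{dn-1} \ne 0$ in the zero-dimensional case. Monotonicity is the content of \cite[Proposition 1.1]{EH}: multiplication by a generic element of $I_d$ shows $\reg R/I^{n+1} \le \reg R/I^n + d$. The condition $e_n \le e_{n+1}+d$ is exactly the statement that $\sdeg I^n$ is weakly increasing, which holds because $a(R/I^n)\le a(R/I^{n+1})$ via the surjection $R/I^{n+1}\to R/I^n$.

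The main obstacle is the sufficiency direction, namely constructing ideals realizing a prescribed sequence. That construction is entirely contained in \cite{HNT}, so here the only real work is checking that its normalization of "defect sequence" agrees with the shift by $+1$ above.
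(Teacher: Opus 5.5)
Your proposal is correct and follows the paper's route: for $\dim R/I = 0$ one has $\sdeg I^n = \reg R/I^n + 1$, so the defect sequence of $\sdeg I^n$ agrees term by term with that of $\reg R/I^n$, and the statement is a translation of \cite[Theorem 4.9]{HNT}, with necessity from \cite[Proposition 1.1]{EH}. In your optional direct check of monotonicity, the ``generic element'' is not what does the work. Write $a_n = a(R/I^n)$; then $a_n+1+d > a(R/I)$, and $I$ is generated in degree $d$, so $R_{a_n+1+d} = I_{a_n+1+d} = I_d R_{a_n+1} = I_d (I^n)_{a_n+1} \subseteq I^{n+1}$, hence $R_j \subseteq I^{n+1}$ for all $j > a_n + d$, i.e. $a_{n+1} \le a_n + d$.
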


Note that the condition $e_n \le e_{n+1} + d$ for all $n \ge 1$ means that $\sdeg I^n$ is a weakly increasing function.

If $\dim R/I > 0$, we know by Example \ref{negative} that the the defect sequence of $\sdeg I^n$ may have negative values. Therefore, the initial behavior of the defect sequence of $\sdeg I^n$ is not the same as that of $\reg R/I^n$. 

We can show that the condition $e_n \le e_{n+1} + d$ for all $n \ge 1$ is the only obstruction for a convergent sequence of non-negative integers to be the defect sequence of a weakly increasing function $\sdeg I^n$.

\begin{Theorem} \label{defect2}
Let $\{e_n\}$ be an arbitrary convergent sequence of non-negative integers. Then $\{e_n\}$ is the defect sequence of a weakly increasing function $\sdeg I^n$ of an ideal $I$ equigenerated in degree $d$ with $\dim R/I > 0$ if and only if $e_n \le e_{n+1} + d$ for all $n \ge 1$.
\end{Theorem}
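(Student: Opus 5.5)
The necessity part is formal. If $\sdeg I^n = dn + e_n$ is weakly increasing, then $dn + e_n \le d(n+1) + e_{n+1}$, which is exactly $e_n \le e_{n+1} + d$. So all the work is in sufficiency: given a convergent sequence of non-negative integers $\{e_n\}$ with $e_n \le e_{n+1}+d$, construct an ideal $I$ equigenerated in degree $d$ with $\dim R/I > 0$ and $\sdeg I^n = dn + e_n$ for all $n \ge 1$. Such a function is then automatically weakly increasing.

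The plan is to reduce to Theorem \ref{defect1}, using a gluing construction in the spirit of Example \ref{negative}. Write $e = \lim e_n$, so $e_n = e$ for $n \ge n_0$. I would decompose $\{e_n\}$ as a pointwise maximum of sequences each of which is realizable.

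First, the sequence $f_n := \max\{e_m \mid m \ge n\}$ is weakly decreasing, non-negative and eventually $e$. It satisfies $f_n \le f_{n+1} + d$: if $f_n = e_n$ this is the hypothesis, and otherwise $f_n = f_{n+1}$. So $\{f_n\}$ is realizable by Theorem \ref{defect1}. The trouble is that $e_n$ can be smaller than $f_n$ (local minima), so a single zero-dimensional ideal is not enough. To produce the dips I need sequences whose defect is large at isolated places and small elsewhere, i.e.\ functions $\sdeg$ that are not monotone in the defect.

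The key device is to realize $\sdeg$ as a maximum of $a$-invariants of several pieces, as in the computation $a(\widetilde{I^n}/I^n)=\max\{1, a(H_n/P^n)\}$ of Example \ref{negative}. Concretely, take polynomial rings $S_1,\dots,S_m$ in disjoint variables with ideals $P_j$ equigenerated in degree $d$, and set
$$R = S_1\otimes_k\cdots\otimes_k S_m \quad\text{(or a suitable quotient)},\qquad I=\sum_j P_jR.$$
Then relate $\widetilde{I^n}/I^n$ to the pieces $\widetilde{P_j^{n}}/P_j^{n}$. A cleaner choice is a fiber-product or ``Stanley--Reisner gluing'' $R = (S_1\times_k\cdots\times_k S_m)$ with $I=(P_1,\dots,P_m)$, which gives $I^n = \bigoplus_j P_j^n$ in positive degrees. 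Then $\widetilde{I^n}/I^n \cong \bigoplus_j \widetilde{P_j^n}/P_j^n$, and hence
$$\sdeg I^n = \max_j \sdeg P_j^n .$$
This also forces $\dim R/I>0$ by adding one factor $k[w]$ with the zero ideal, or more simply one piece of positive dimension.

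With this, it suffices to realize, for each index $n_1$ with $e_{n_1}$ ``needed'', a piece $P_j$ with $\sdeg P_j^n - dn \le e_n$ for all $n$ and equality at $n_1$. For the tail this is Theorem \ref{defect1}'s ideal for the constant-type sequence. For each $n_1 < n_0$ I want a piece whose defect is $\le$ the given sequence and equals $e_{n_1}$ at $n_1$. Here I would use the lower envelope $g^{(n_1)}_n := \max\{0,\, e_{n_1} - d(n-n_1)\}$ for $n\ge n_1$, which is weakly decreasing with drops $\le d$ and is $\le e_n$ for $n\ge n_1$ by iterating the hypothesis. For $n<n_1$ the defect should be very negative, i.e.\ $\sdeg P_j^n$ small. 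That is exactly what Example \ref{negative} displays: $\sdeg$ stays constant for $n \le t$, then jumps. So the main obstacle is a generalized Example \ref{negative}: a family of equigenerated degree-$d$ ideals $P$ in positive dimension with $\depth S/P^n>0$ (so $\sdeg P^n=-\infty$) for $n<n_1$, and prescribed $\sdeg P^n - dn = g_n$ for $n\ge n_1$. I would try $P = J\cdot L$ or $P=J+L$ combined with an odd-cycle-type ideal whose powers acquire $\mm$ as an associated prime exactly from $n_1$ on. Multiplying by a zero-dimensional ideal realizing $\{g_n\}$ via Theorem \ref{defect1} should adjust the defect. Degrees can be raised to $d$ by multiplying generators with powers of a new variable, as in Example \ref{EHL}. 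Verifying that these combinations yield exactly the required values is the step I expect to be hardest.
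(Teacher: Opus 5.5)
Your necessity argument is correct. So is your gluing framework: in a fiber product where mixed products vanish, $\sdeg I^n=\max_j\sdeg P_j^n$. This is also the mechanism of the paper's proof, which glues the pieces over a common variable $y$ by putting all $x_ix_j$, $i\neq j$, into the defining ideal. But once the framework is in place, sufficiency consists entirely of constructing the pieces, and that is exactly what you leave open.

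Moreover, the devices you do propose fail.
\begin{itemize}
\item The tail piece cannot come from Theorem \ref{defect1}. A zero-dimensional piece has a weakly decreasing defect, so one with limit $e$ has defect $\ge e$ at every $n$. It therefore overshoots as soon as $e_n<e$ for some earlier $n$. For example, $d=2$, $e_1=1$, $e_n=3$ for $n\ge 2$ satisfies the hypothesis, yet the tail piece already gives $\sdeg I\ge 5>3$.
\item For the same reason every piece, the tail included, needs the ``delayed onset'' you only ask for when $n_1<n_0$. So Theorem \ref{defect1} is of no use here at all.
\item Raising degrees with a new indeterminate $w$ over a polynomial ring $S$ destroys the invariant. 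Indeed $(w^cP)^n=w^{cn}P^nS[w]=(w^{cn})\cap P^nS[w]$ is an intersection of saturated ideals, hence saturated, so $\sdeg=-\infty$ for all $n$. In Example \ref{EHL}, $z$ is a linear form of the ring itself, and there $\widetilde{I^n}$ becomes $(z^n)$ rather than being shifted.
\item Multiplying by a zero-dimensional ideal, or deforming odd-cycle edge ideals (degree $2$, defect $0$ after onset), is not analyzed. There is no general rule controlling $\sdeg$ under such operations.
\end{itemize}

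The missing idea is that you ask far too much of each piece. The hypothesis says exactly that $s_n:=dn+e_n$ is weakly increasing. So it suffices that the $i$-th piece contributes nothing for $n<i$ and the \emph{constant} $s_i$ for all $n\ge i$, since then $\max_{i\le n}s_i=s_n$. Its defect $e_i-d(n-i)$ may turn negative, which is harmless under the maximum. One further non-nilpotent piece supplies $dn+e_m$ for $n\ge m$, where $m$ is the stability index.

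Constant contributions come from pieces on which $I$ is nilpotent (compare Proposition \ref{cases}(1)), and the delayed onset is then automatic. Take $k[x_i,y]/(x_i^{(i-1)d}y^{e_i+1},x_i^{id})$ with generator $x_i^d$:
\begin{itemize}
\item for $n<i$, the power $x_i^{nd}$ divides both relations, so the piece's ideal is $(x_i^{nd})$, which is saturated;
\item for $n\ge i$, the power $x_i^{nd}$ vanishes, leaving the torsion block $\{x_i^ay^b\mid (i-1)d\le a<id,\ b\le e_i\}$ of top degree $id+e_i-1$.
\end{itemize}
Dropping the relation $x_i^{id}$ for $i=m$ gives top degree $nd+e_m-1$ for $n\ge m$. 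For $i=1$, a block of $H_\mm^0(R)$ of top degree $d+e_1-1$ does the job. Glued over $y$, this is the paper's ring $S/Q$ with $I=(x_2,\dots,x_m)^dR$, and the verification is a monomial count.

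Watch the corner $e_1=0$. There you need an element of $\widetilde I\setminus I$ of degree exactly $d-1$, but the paper's block $(x_1)\setminus(x_1y^d,x_1^{e_1+1})$ is empty in that case. For $d=1$ with $R_0$ a field this is in fact impossible: $\widetilde I_0\neq 0$ forces $1\in\widetilde I$, hence $\dim R/I=0$.
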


\begin{proof}
We only need to construct an ideal $I$ equigenerated in degree $d$ with $\dim R/I > 0$ such that $\sdeg I^n = nd + e_n$ for all $n \ge 1$ if $e_n \le e_{n+1} + d$ for all $n \ge 1$.

Let $m$ be the stability index of the sequence $\{e_n\}$ (which is the least integer $m$ such that $e_n = e_m$ for all $n > m$). Let $S = k[x_1,...,x_m,y]$, $P = (x_2,...,x_m)$, and
$$Q = (x_1y^d,x_2^{d}y^{e_2+1},...,x_m^{(m-1)d}y^{e_m+1},x_1^{e_1+1},x_2^{2d},...,x_{m-1}^{(m-1)d},D),$$
where $D = (x_ix_j|\ i,j = 1,...,m, i \neq j)$. The set $\{x_2^{2d},...,x_{m-1}^{(m-1)d}\}$ is empty if $m \le 2$.
Let $R = S/Q$ and $I = (P^d+Q)/Q$.  
The inclusion of $D$ in $Q$ allows us to deal only with monomials of the form $x_i^ay^b$, $i = 1,...,m$.

Note that as an ideal of $R$, $I$ is minimally generated by elements of degree $d$.

We have
$\widetilde{I^n}/I^n = \widetilde{(P^{dn},Q)}/(P^{dn},Q).$ Hence
$$\sdeg I^n = a(\widetilde{(P^{dn},Q)}/(P^{dn},Q))+1.$$
Note that $a(\widetilde{(P^{dn},Q)}/(P^{dn},Q))$ is the maximal degree of the monomials in $\widetilde{(P^{dn},Q)} \setminus (P^{dn},Q)$.
Since $y$ is a system of parameters for $S/(P^{dn},Q)$, 
$$
\widetilde{(P^{dn},Q)} = \bigcup_{t \ge 0}(P^{dn},Q):y^t   
                       =(x_1,x_2^d,\ldots,x_m^{(m-1)d},x_2^{nd},\ldots,x_m^{nd},D).
$$

For $n < m$, we have
\begin{align*} 
\widetilde{(P^{dn},Q)} & = (x_1,x_2^d,...,x_n^{(n - 1)d},x_{n+1}^{nd},...,x_m^{nd},D),\\
(P^{dn},Q) & = (x_1y^{d},x_2^dy^{e_2+1}...,x_n^{(n-1)d}y^{e_n+1},
x_1^{e_1+1},x_2^{2d},...,x_n^{nd},x_{n+1}^{nd},...,x_m^{nd},D).
\end{align*} 
The monomials in $\widetilde{(P^{dn},Q)} \setminus (P^{dn},Q)$ are monomials in the sets $(x_1) \setminus (x_1y^{d},x_1^{e_1+1})$, $(x_i^{(i-1)d}) \setminus (x_i^{(i-1)d}y^{e_i+1},x_i^{id})$, $i = 2,...,n$.
From this it follows that $a(\widetilde{(P^{d},Q)}/(P^{d},Q))$ is attained among the degrees of the elements 
$x_1^{e_1}y^{d-1},x_{i}^{id-1}y^{e_{i}},$ $i = 2,...,n$.
Hence,
$$a(\widetilde{(P^{dn},Q)}/(P^{dn},Q)) = \max\{id + e_i-1|\ i = 1,...,n\} = nd + e_n-1$$
because $id + e_i \le nd+e_n$ for $i = 1,...,n$. Therefore, $\sdeg I^n = nd + e_n$.

For $n \ge m$, we have
\begin{align*} 
\widetilde{(P^{dn},Q)} & = (x_1,x_2^d,...,x_{m}^{(m - 1)d},D)\\
(P^{dn},Q) & = (x_1y^{d},x_2^dy^{e_2+1}...,x_m^{(m-1)d}y^{e_m+1},
x_1^{e_1+1},x_2^{2d},...,x_{m-1}^{(m-1)d},x_m^{nd},D).
\end{align*} 
The monomials in $\widetilde{(P^{dn},Q)} \setminus (P^{dn},Q)$ are monomials in the sets $(x_1) \setminus (x_1y^{d},x_1^{e_1+1})$, $(x_i^{(i-1)d}) \setminus (x_i^{(i-1)d}y^{e_i+1},x_i^{id})$, $i = 2,...,m-1$, and $(x_m^{(m-1)d}) \setminus (x_m^{(m-1)d}y^{e_{m}+1},x_m^{nd}).$
From this it follows that $a(\widetilde{(P^{d},Q)}/(P^{d},Q))$ is attained among the degrees of the elements 
$x_1^{e_1}y^{d-1},x_{i}^{id-1}y^{e_i},$ $i = 2,...,m-1$, and $x_m^{nd-1}y^{e_m}$.
Hence,
$$a(\widetilde{(P^{dn},Q)}/(P^{dn},Q)) = \max\{id + e_i-1|\ i = 1,...,m-1,n\} = nd + e_n-1$$
because $e_m = e_n$ and $id + e_i \le nd+e_n$ for $i = 1,...,n$. Therefore, $\sdeg I^n = nd + e_n$, as required.
\end{proof}

Given any convergent sequence $\{e_n\}$ of non-negative integers, the condition $e_n \le e_{n+1} + d$ is satisfied if $d$ is large enough. Therefore, $\{e_n\}$ is the defect sequence of the function $\sdeg I^n$ for some equigenerated ideal $I$ with $\dim R/I > 0$. 

If $d$ is given, we can only show that the defect sequence of  $\sdeg I^n$ can be any weakly decreasing sequence of non-negative integers.

\begin{Theorem} \label{defect3}
Let $\{e_n\}$ be an arbitrary weakly decreasing sequence of non-negative integers. Then $\{e_n\}$ is the defect sequence of the function $\sdeg I^n$ of an ideal $I$ equigenerated in any degree $d$.
\end{Theorem}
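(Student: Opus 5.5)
Since $\{e_n\}$ is weakly decreasing and non-negative, it stabilizes. Let $m$ be its stability index, so $e_n = e_m$ for $n \ge m$. The condition $e_n \le e_{n+1}+d$ holds automatically, since $e_n - e_{n+1}$ could exceed $d$. The point is to construct, for an arbitrary fixed $d$, an ideal with the prescribed defects. I would modify the monomial construction in the proof of Theorem \ref{defect2}, replacing its role of ``$d$ large'' by the monotonicity of $\{e_n\}$. Work in $S = k[x_1,\dots,x_m,y]$, set $P = (x_1,\dots,x_m)$ (or $(x_2,\dots,x_m)$ as before), and kill all mixed products $D = (x_ix_j \mid i \ne j)$. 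Modulo $D$ we then only have to handle monomials $x_i^a y^b$, and $I = (P^d+Q)/Q$ in $R = S/Q$ is equigenerated in degree $d$. We then have $\sdeg I^n = a(\widetilde{(P^{dn},Q)}/(P^{dn},Q))+1$, and the saturation is computed as $\bigcup_t (P^{dn},Q):y^t$, because $y$ is a parameter.

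The idea is to let the variable $x_i$ be responsible for the value at $n=i$. In Theorem \ref{defect2}, $x_i$ contributed a monomial $x_i^{id-1}y^{e_i}$ of degree $id+e_i-1$ for every $n \ge i$. That required $id+e_i \le nd+e_n$, which is exactly the increasing condition. To avoid this requirement I make the contribution of $x_i$ grow with $n$. For each $i<m$ put in $Q$ the generator $x_i^{a_i}y^{e_i+1}$ and nothing bounding pure powers of $x_i$ except $P^{dn}$. Then, in degree $n$, the $x_i$-part of $\widetilde{(P^{dn},Q)}\setminus (P^{dn},Q)$ consists of the monomials $x_i^{a}y^b$ with $a_i \le a < dn$ and $b \le e_i$. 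Its top degree is $dn-1+e_i$ once $dn > a_i$, and the part is empty if $dn \le a_i$.

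With this choice the $x_i$-part contributes $dn+e_i-1$ exactly when $n > a_i/d$. Taking $a_i = (i-1)d$, the variable $x_i$ contributes for $n \ge i$. The maximum over the active $i$ is then $dn + \max\{e_i : i \le n\} - 1$. Because the sequence is weakly decreasing, this maximum is attained at $i=1$ and equals $e_1$, which is the wrong value.

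So the roles must be reversed: $x_i$ should be active for $n \le i$ and die for $n > i$. I would achieve this by choosing the pure power $x_i^{id+1}\in Q$ together with $x_i^{c}y^{e_i+1}\in Q$ and the saturation generator $x_i^{c}$. The difficulty is that the cutoff by $P^{dn}$ then becomes $\min(dn, id+1)$, so the contribution of $x_i$ for $n > i$ is capped at $id+e_i$. I must ensure that this cap is $\le dn+e_n-1$ for $n>i$, which holds when $e_i - e_n \le (n-i)d-1$. This fails in general, so I would instead take the cap at the same level as the value for $n=i$ and use fresh variables $x_{i,j}$ to fine-tune the values.

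The main obstacle is this bookkeeping. I expect the cleanest fix is to let the exponent of $y$ depend on $n$ through colon ideals, for example by using generators $x_i^{dn'}y^{e_{n'}+1}$ for $n' = 1,\dots,m$ on a single variable $x_1$ (a staircase). Then the element $x_1^{dn-1}y^{e_n}$ survives while the deeper monomials are killed because $e_{n'}$ decreases. Verifying that this staircase gives exactly $dn+e_n-1$ for every $n$, and $dn + e_m - 1$ for $n\ge m$, is the computation I would carry out in detail.
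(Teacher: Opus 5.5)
Your proposal does not reach a construction. It ends with an unverified staircase suggestion, and that suggestion cannot work, nor can any other variant inside the framework you set up.

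First, a slip in your opening: $e_n\le e_{n+1}+d$ does \emph{not} hold automatically. The weakly decreasing sequences violating it are exactly those not already covered by Theorem~\ref{defect2}, namely those for which $\sdeg I^n=dn+e_n$ must strictly drop from some $n$ to $n+1$.

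In your framework, $\sdeg I^n$ is always weakly increasing. Here $Q\supseteq D$ is monomial in the $x$'s and $y$, $P$ is generated by some of the $x_j$, $I=(P^d+Q)/Q$, and the saturation is $(P^{dn},Q):y^\infty$. Modulo $D$, the ideal $(P^{dn},Q)$ is just $Q$ plus the pure powers $x_j^{dn}$ with $x_j\in P$. Hence a monomial $x_i^ay^b$ lies in $\widetilde{(P^{dn},Q)}\setminus(P^{dn},Q)$ if and only if
\begin{itemize}
\item $x_i^ay^b\notin Q$,
\item $x_i^a\in Q:y^\infty$, and
\item $a<dn$ whenever $x_i\in P$.
\end{itemize}
The only dependence on $n$ is the cutoff $a<dn$, so this set of monomials only grows with $n$. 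Contributions made at small $n$ are never killed, only capped; this is exactly what you ran into with $\min(dn,id+1)$. Fresh variables $x_{i,j}$ or a staircase $x_1^{dn'}y^{e_{n'}+1}$ do not change this. For instance, $d=1$, $e_1=5$, $e_n=0$ for $n\ge 2$ requires $\sdeg I=6>\sdeg I^2=2$, which no such $Q$ can produce.

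The missing idea is a summand of $\widetilde{I^n}/I^n$ whose generator moves with $n$ and whose annihilator modulo $I^n$ grows with $n$. The paper takes $S=k[x,y,z,u]$,
$$Q=(x^{e_1+2},x^{e_1+1}y,\dots,x^{e_{m-1}+1}y^{m-1},y^m)+(x,y,z^{e_m+1})u,$$
and $I=(y(y,z)^{d-1},u^d)R$. With $z$ as a parameter, $\widetilde{I^n}=(u,y^n)R+I^n$. Since $yu\in Q$, the quotient $\widetilde{I^n}/I^n$ splits as the direct sum of
$$(uR+I^n)/I^n\cong (R/(I^n:u))(-1) \quad\text{and}\quad (y^nR+I^n)/I^n\cong (R/(I^n:y^n))(-n).$$

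In the second summand, $Q:y^n$ contains $x^{e_n+1}$ and the shifted staircase $x^{e_{n+j}+1}y^j$. So the high powers of $x$ responsible for the large values at small $n$ are killed as $n$ grows. Weak decrease of $\{e_n\}$ then puts the top degree at $dn+e_n-1$, attained by $y^nx^{e_n}z^{(d-1)n-1}$.

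The first summand satisfies $R/(I^n:u)\cong k[z,u]/(z^{e_m+1},u^{dn-1})$ and contributes $dn+e_m-1\le dn+e_n-1$. It takes over for $n\ge m$, when $y^n\in Q$ and the second summand vanishes.
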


\begin{proof}
We only need to construct an ideal $I$ equigenerated in degree $d$ with $\dim R/I > 0$ such that $\sdeg I^n = nd + e_n$ for all $n \ge 1$.

Let $m$ be the stability index of the sequence $\{e_n\}$. Let $S=k[x,y,z,u]$ and
$$Q =  (x^{e_1+2}, x^{e_1+1}y,...,x^{e_{m-1}+1}y^{m-1},y^m)  + (x,y,z^{e_m+1})u.$$
Set $R = S/Q$ and $I=(y(y,z)^{d-1},u^d)R$. 

Recall that $\sdeg I^n = a(\widetilde{I^n} /I^n) +1$.
Since $z$ is a system of parameters for $R/I$, 
$$\widetilde{I^n} = \cup_{t \ge 1}I^n:z^t = (u,y^n)R+I^n.$$ 
Since $yu \in Q$, 
$\widetilde{I^n} /I^n = (uR+I^n/I^n) \oplus (y^nR+I^n/I^n).$
From this it follows that
\begin{align*}  \tag{1}
a(\widetilde{I^n} /I^n) & = \max\{a(uR+I^n/I^n), a(y^nR+I^n/I^n)\}\\
 & = \max\{a(R/I^n:u)+1, a(R/I^n:y^n) + n\}.
\end{align*} 

We have $R/I^n:u  \cong k[u]/(z^{e_m+1},u^{dn-1}).$
Hence, 
\[ \tag{2}
a(R/I^n:u) = dn + e_m -2.
\] 

On the other hand, we have
\begin{align*}
R/I^n:y^n  & \cong k[x,y,z]/(x^{e_1+2},x^{e_1+1}y,...,x^{e_{m-1}+1}y^{m-1}):y^{n} + (y,z)^{(d-1)n}.
\end{align*}
Thus, $a(R/I^n:y^n)$ is the maximal degree of the monomials in $k[x,y,z]$ not contained in $(x^{e_1+2},x^{e_1+1}y,...,x^{e_{m-1}+1}y^{m-1}):y^{n} + (y,z)^{(d-1)n}$.

If $n < m$, we have 
$$(x^{e_1+1}y,...,x^{e_{m-1}+1}y^{m-1}):y^{n} = (x^{e_n+1},x^{e_{n+1}+1}y,...,x^{e_{m-1}+1}y^{m-n-1}).$$
From this it follows that $a(R/I^n:y^n)$ is attained among the degrees of the monomials
$$x^{e_n}z^{(d-1)n-1},x^{e_{n+1}}yz^{(d-1)n-2},...,x^{e_{m-1}}y^{m-n-1}z^{(d-1)n-m+n}.$$
Hence,
\begin{align*}
a(R/I^n:y^n) & = \max\{e_i + (d-1)n-1|\ i = n,...,m-1\} = e_n + (d-1)n-1
\end{align*}
because $\{e_n\}$ is a weakly decreasing sequence.
Therefore, by (1) and (2) we obtain
$$a(\widetilde{I^n} /I^n) = \max\{dn + e_m - 1, dn + e_n - 1\} = dn + e_n - 1.$$

If $n \ge m$, we have $\widetilde{I^n}= uR+I^n$. Hence
$$a(\widetilde{I^n} /I^n) = a(R/I^n:u) + 1 = dn + e_m -1.$$
\end{proof}

We call the initial part of the function $\sdeg I^n$ before it becomes a linear function its {\em initial sequence}. 
The following consequence of  Theorem \ref{defect2} and  Theorem \ref{defect3} shows that this sequence can be any finite monotone sequence.

\begin{Corollary} \label{monotone}
Any finite monotone sequence of non-negative integers $s_n \ge nd$, $n = 1,...,m$, is the initial sequence of the function $\sdeg I^n$ of an ideal $I$ equigenerated in degree $d$.
\end{Corollary}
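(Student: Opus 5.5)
Set $e_n := s_n - nd$ for $n = 1,\dots,m$; these are non-negative integers because $s_n \ge nd$. Extend the sequence by $e_n := e_m$ for all $n > m$. The resulting sequence $\{e_n\}_{n\ge1}$ converges and has stability index at most $m$. If we produce an ideal $I$ equigenerated in degree $d$ whose function $\sdeg I^n$ has defect sequence $\{e_n\}$, then $\sdeg I^n = nd + e_n = s_n$ for $n \le m$, and $\sdeg I^n = nd + e_m$ is linear of slope $d$ for $n \ge m$. So the initial sequence of $\sdeg I^n$ is $s_1,\dots,s_m$. We split according to the direction of monotonicity.

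\emph{Case 1: $s_n$ is weakly increasing.} Then $s_n \le s_{n+1}$, that is, $nd + e_n \le (n+1)d + e_{n+1}$, which gives $e_n \le e_{n+1} + d$ for $n < m$. For $n \ge m$ we have $e_n = e_{n+1}$, so the inequality holds trivially. Thus $\{e_n\}$ is a convergent sequence of non-negative integers with $e_n \le e_{n+1}+d$ for all $n \ge 1$. Theorem \ref{defect2} then gives an ideal $I$ equigenerated in degree $d$ (with $\dim R/I>0$) whose function $\sdeg I^n$ has defect sequence $\{e_n\}$.

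\emph{Case 2: $s_n$ is weakly decreasing.} Then $s_n \ge s_{n+1}$, so $e_n = s_n - nd \ge s_{n+1} - nd = e_{n+1} + d \ge e_{n+1}$ for $n < m$, using $d \ge 0$. Hence $\{e_n\}$, extended constantly, is a weakly decreasing sequence of non-negative integers. Theorem \ref{defect3} then produces an ideal $I$ equigenerated in degree $d$ with defect sequence $\{e_n\}$. If $d \ge 1$, the hypothesis $s_n \ge nd$ together with weak decrease forces strong restrictions on the $s_n$, but no additional argument is needed.

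The argument is mostly bookkeeping. The one subtle point is the meaning of ``initial sequence'': the portion of $\sdeg I^n$ before it becomes linear. Since the extended defect sequence is constant from $n=m$ on, $\sdeg I^n$ is linear of slope $d$ from $n=m$ onward, so $s_1,\dots,s_m$ is exactly the initial sequence. If $s_m$ is to be read strictly as the last value before linearity begins, the construction can be shifted to stabilize at index $m+1$ with any value $e_{m+1}$ that preserves monotonicity, for example $e_{m+1}=e_m$.
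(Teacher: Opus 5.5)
Your proposal is correct and matches the paper's proof: set $e_n = s_n - dn$ for $n \le m$ and $e_n = e_m$ for $n > m$, then apply Theorem~\ref{defect2} in the weakly increasing case (since $e_n \le e_{n+1}+d$) and Theorem~\ref{defect3} in the weakly decreasing case (since $e_n \ge e_{n+1}+d \ge e_{n+1}$). Your closing remark about shifting the stabilization to index $m+1$ is unnecessary, because taking $e_{m+1}=e_m$ is exactly the construction you already used.
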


\begin{proof}
Set $e_n = s_n - dn$ for $n \le m$ and $e_n = e_m$ for $n > m$. We only need to show that $\{e_n\}$ is the defect sequence of the function $\sdeg I^n$ of an ideal $I$ equigenerated in degree $d$.
If $\{s_n\}$ is a weakly increasing sequence, then $e_n \le e_{n+1}+d$ for all $n \ge 1$.
Hence, the conclusion follows from Theorem \ref{defect2}.
If $\{s_n\}$ is a weakly decreasing sequence, then $e_n \ge e_{n+1}+d$ for $n < m$.
Hence $\{e_n\}$ is a weakly decreasing sequence. 
Therefore, the conclusion follows from Theorem \ref{defect3}.
\end{proof}

Another consequence of Theorem \ref{defect3} is that  the function $\sdeg I^n$ can fluctuate everywhere
before it becomes a linear function. It is easy to construct a function $\sdeg I^n$ with local maxima and minima at any sequence of prescribed places. 

\begin{Example}
Let $1 = m_1 < m_2 < \cdots < m_{2s+1} = m$
be any sequence of positive integers. Let $d \ge 1$ and $e \ge 0$.
Using descending induction we can construct a weakly decreasing sequence $\{e_n\}$ of non-negative integers by the following rules:
\begin{enumerate}
\item $e_n = e$ for $n \ge m$.
\item $e_n = e_{m_{2i+1}} + (d+1)(m_{2i+1}-n)$ if $m_{2i+1}-1 \ge n \ge m_{2i}$, $i = s,...,1$.
\item $e_n = e_{m_{2i}} + (d-1)(m_{2i}-n)$ if $m_{2i}-1 \ge n \ge m_{2i-1}$, $i = s,...,1$.
\end{enumerate}
By Theorem \ref{defect3} there is an ideal $I$ equigenerated in degree $d$ such that
$\sdeg I^n = dn + e_n$ for $n \ge 1$. By (1), $\sdeg I^n = dn+e$ for $n \ge m$.
By (2) and (3) we have 
\begin{align*}
\sdeg I^n & = \sdeg I^{n+1} + 1\ \text{if $m_{2i+1}-1 \ge n \ge m_{2i}$, $i = s,...,1$},\\
\sdeg I^n & = \sdeg I^{n+1} - 1\ \text{if $m_{2i}-1 \ge n \ge m_{2i-1}$, $i = s,...,1$},
\end{align*}
 Therefore, the function $\sdeg I^n$ has local minima at $n = m_1,...,m_{2s+1}$ and local 
 maxima at $n = m_2,...,m_{2s}$. In particular, if $m_1 < m_2 < \cdots < m_{2s+1}$ is the sequence $1,...,m$, the function $\sdeg I^n$ has local minima and maxima at every number $n \le m$.
\end{Example}

\begin{Remark}
We conjecture that any convergent sequence of non-negative integers is 
the defect sequence of the function $\sdeg I^n$ of an ideal $I$ equigenerated in any given degree $d$.
\end{Remark}


\end{document}